\newcommand{\ops}[2]{\displaystyle\mathop{\vee}_{#1,#2}}
\newcommand{\ZZZ}{\mathfrak{Z}}
\newcommand{\PPP}{\mathcal{P}}
\newcommand{\lvrace}{{\boldsymbol\{}}
\newcommand{\rvrace}{{\boldsymbol\}}}
\newcommand{\ed}[2]{\lvrace #1,#2 \rvrace}
\theoremstyle{plain}   
\newtheorem{thm}{Theorem}[section]
\newtheorem{theorem}[thm]{Theorem}
\newtheorem{lemma}[thm]{Lemma}
\newtheorem{proposition}[thm]{Propsition}
\theoremstyle{remark}  
\newtheorem{remark}[thm]{Remark}
\newtheorem{example}[thm]{Example}
\theoremstyle{definition}  
\newtheorem{definition}[thm]{Definition}
\newcommand{\figI}{
\begin{picture}(160,50)
\put(12,25){\makebox(0,0)[c]{$\cdot$}}
\put(15,30){\makebox(0,0)[bc]{$\overbrace{\quad\quad}^{a_1-1}$}}
\put(15,25){\makebox(0,0)[c]{$\cdot$}}
\put(18,25){\makebox(0,0)[c]{$\cdot$}}
\put(5,25){\makebox(0,0){$\bullet$}}
\put(25,25){\makebox(0,0){$\bullet$}}
\put(15,5){\makebox(0,0){$\bullet$}}
\put(15,5){\line(1,2){10}}
\put(15,5){\line(-1,2){10}}
\put(42,25){\makebox(0,0)[c]{$\cdot$}}
\put(45,30){\makebox(0,0)[bc]{$\overbrace{\quad\quad}^{a_2-1}$}}
\put(45,25){\makebox(0,0)[c]{$\cdot$}}
\put(48,25){\makebox(0,0)[c]{$\cdot$}}
\put(35,25){\makebox(0,0){$\bullet$}}
\put(55,25){\makebox(0,0){$\bullet$}}
\put(45,5){\makebox(0,0){$\bullet$}}
\put(45,5){\line(1,2){10}}
\put(45,5){\line(-1,2){10}}
\put(72,25){\makebox(0,0)[c]{$\cdot$}}
\put(75,30){\makebox(0,0)[bc]{$\overbrace{\quad\quad}^{a_3-1}$}}
\put(75,25){\makebox(0,0)[c]{$\cdot$}}
\put(78,25){\makebox(0,0)[c]{$\cdot$}}
\put(65,25){\makebox(0,0){$\bullet$}}
\put(85,25){\makebox(0,0){$\bullet$}}
\put(75,5){\makebox(0,0){$\bullet$}}
\put(75,5){\line(1,2){10}}
\put(75,5){\line(-1,2){10}}
\put(102,25){\makebox(0,0)[c]{$\cdot$}}
\put(105,30){\makebox(0,0)[bc]{$\overbrace{\quad\quad}^{a_4-1}$}}
\put(105,25){\makebox(0,0)[c]{$\cdot$}}
\put(108,25){\makebox(0,0)[c]{$\cdot$}}
\put(95,25){\makebox(0,0){$\bullet$}}
\put(115,25){\makebox(0,0){$\bullet$}}
\put(105,5){\makebox(0,0){$\bullet$}}
\put(105,5){\line(1,2){10}}
\put(105,5){\line(-1,2){10}}
\put(115,5){\line(-1,0){100}}
\put(125,15){\makebox(0,0)[c]{$\cdots$}}
\put(135,5){\line(1,0){10}}
\put(142,25){\makebox(0,0)[c]{$\cdot$}}
\put(145,30){\makebox(0,0)[bc]{$\overbrace{\quad\quad}^{a_d-1}$}}
\put(145,25){\makebox(0,0)[c]{$\cdot$}}
\put(148,25){\makebox(0,0)[c]{$\cdot$}}
\put(135,25){\makebox(0,0){$\bullet$}}
\put(155,25){\makebox(0,0){$\bullet$}}
\put(145,5){\makebox(0,0){$\bullet$}}
\put(145,5){\line(1,2){10}}
\put(145,5){\line(-1,2){10}}
\end{picture}
}
\newcommand{\figII}{\begin{picture}(160,60)
\put(12,25){\makebox(0,0)[c]{$\cdot$}}
\put(15,30){\makebox(0,0)[bc]{$\overbrace{\quad\quad}^{a_1-1}$}}
\put(15,25){\makebox(0,0)[c]{$\cdot$}}
\put(18,25){\makebox(0,0)[c]{$\cdot$}}
\put(5,25){\makebox(0,0){$\bullet$}}
\put(25,25){\makebox(0,0){$\bullet$}}
\put(15,5){\makebox(0,0){$\bullet$}}
\put(15,5){\line(1,2){10}}
\put(15,5){\line(-1,2){10}}
\put(15,5){\line(3,1){30}}
\put(75,5){\line(-3,1){30}}
\put(42,35){\makebox(0,0)[c]{$\cdot$}}
\put(45,40){\makebox(0,0)[bc]{$\overbrace{\quad\quad}^{a_2-1}$}}
\put(45,35){\makebox(0,0)[c]{$\cdot$}}
\put(48,35){\makebox(0,0)[c]{$\cdot$}}
\put(35,35){\makebox(0,0){$\bullet$}}
\put(55,35){\makebox(0,0){$\bullet$}}
\put(45,15){\makebox(0,0){$\bullet$}}
\put(45,15){\line(1,2){10}}
\put(45,15){\line(-1,2){10}}
\put(72,25){\makebox(0,0)[c]{$\cdot$}}
\put(75,30){\makebox(0,0)[bc]{$\overbrace{\quad\quad}^{a_3-1}$}}
\put(75,25){\makebox(0,0)[c]{$\cdot$}}
\put(78,25){\makebox(0,0)[c]{$\cdot$}}
\put(65,25){\makebox(0,0){$\bullet$}}
\put(85,25){\makebox(0,0){$\bullet$}}
\put(75,5){\makebox(0,0){$\bullet$}}
\put(75,5){\line(1,2){10}}
\put(75,5){\line(-1,2){10}}
\put(102,25){\makebox(0,0)[c]{$\cdot$}}
\put(105,30){\makebox(0,0)[bc]{$\overbrace{\quad\quad}^{a_4-1}$}}
\put(105,25){\makebox(0,0)[c]{$\cdot$}}
\put(108,25){\makebox(0,0)[c]{$\cdot$}}
\put(95,25){\makebox(0,0){$\bullet$}}
\put(115,25){\makebox(0,0){$\bullet$}}
\put(105,5){\makebox(0,0){$\bullet$}}
\put(105,5){\line(1,2){10}}
\put(105,5){\line(-1,2){10}}
\put(115,5){\line(-1,0){100}}
\put(125,15){\makebox(0,0)[c]{$\cdots$}}
\put(135,5){\line(1,0){10}}
\put(142,25){\makebox(0,0)[c]{$\cdot$}}
\put(145,30){\makebox(0,0)[bc]{$\overbrace{\quad\quad}^{a_d-1}$}}
\put(145,25){\makebox(0,0)[c]{$\cdot$}}
\put(148,25){\makebox(0,0)[c]{$\cdot$}}
\put(135,25){\makebox(0,0){$\bullet$}}
\put(155,25){\makebox(0,0){$\bullet$}}
\put(145,5){\makebox(0,0){$\bullet$}}
\put(145,5){\line(1,2){10}}
\put(145,5){\line(-1,2){10}}
\end{picture}
}
\newcommand{\figIII}{\begin{picture}(120,60)
\put(12,25){\makebox(0,0)[c]{$\cdot$}}
\put(15,30){\makebox(0,0)[bc]{$\overbrace{\quad\quad}^{a_1-1}$}}
\put(15,25){\makebox(0,0)[c]{$\cdot$}}
\put(18,25){\makebox(0,0)[c]{$\cdot$}}
\put(5,25){\makebox(0,0){$\bullet$}}
\put(25,25){\makebox(0,0){$\bullet$}}
\put(15,5){\makebox(0,0){$\bullet$}}
\put(15,5){\line(1,2){10}}
\put(15,5){\line(-1,2){10}}
\put(15,5){\line(3,1){30}}
\put(42,35){\makebox(0,0)[c]{$\cdot$}}
\put(45,40){\makebox(0,0)[bc]{$\overbrace{\quad\quad}^{a_2-1}$}}
\put(45,35){\makebox(0,0)[c]{$\cdot$}}
\put(48,35){\makebox(0,0)[c]{$\cdot$}}
\put(35,35){\makebox(0,0){$\bullet$}}
\put(55,35){\makebox(0,0){$\bullet$}}
\put(45,15){\makebox(0,0){$\bullet$}}
\put(45,15){\line(1,2){10}}
\put(45,15){\line(-1,2){10}}
\put(45,15){\line(1,0){30}}
\put(72,35){\makebox(0,0)[c]{$\cdot$}}
\put(75,40){\makebox(0,0)[bc]{$\overbrace{\quad\quad}^{a_3-1}$}}
\put(75,35){\makebox(0,0)[c]{$\cdot$}}
\put(78,35){\makebox(0,0)[c]{$\cdot$}}
\put(65,35){\makebox(0,0){$\bullet$}}
\put(85,35){\makebox(0,0){$\bullet$}}
\put(75,15){\makebox(0,0){$\bullet$}}
\put(75,15){\line(1,2){10}}
\put(75,15){\line(-1,2){10}}
\put(105,5){\line(-3,1){30}}
\put(105,5){\line(-1,0){90}}
\put(102,25){\makebox(0,0)[c]{$\cdot$}}
\put(105,30){\makebox(0,0)[bc]{$\overbrace{\quad\quad}^{a_4-1}$}}
\put(105,25){\makebox(0,0)[c]{$\cdot$}}
\put(108,25){\makebox(0,0)[c]{$\cdot$}}
\put(95,25){\makebox(0,0){$\bullet$}}
\put(115,25){\makebox(0,0){$\bullet$}}
\put(105,5){\makebox(0,0){$\bullet$}}
\put(105,5){\line(1,2){10}}
\put(105,5){\line(-1,2){10}}
\end{picture}}
\newcommand{\figIV}{\begin{picture}(120,60)
\put(12,25){\makebox(0,0)[c]{$\cdot$}}
\put(15,30){\makebox(0,0)[bc]{$\overbrace{\quad\quad}^{a_1-1}$}}
\put(15,25){\makebox(0,0)[c]{$\cdot$}}
\put(18,25){\makebox(0,0)[c]{$\cdot$}}
\put(5,25){\makebox(0,0){$\bullet$}}
\put(25,25){\makebox(0,0){$\bullet$}}
\put(15,5){\makebox(0,0){$\bullet$}}
\put(15,5){\line(1,2){10}}
\put(15,5){\line(-1,2){10}}
\put(15,5){\line(3,1){30}}
\put(15,5){\line(6,1){60}}
\put(42,35){\makebox(0,0)[c]{$\cdot$}}
\put(45,40){\makebox(0,0)[bc]{$\overbrace{\quad\quad}^{a_2-1}$}}
\put(45,35){\makebox(0,0)[c]{$\cdot$}}
\put(48,35){\makebox(0,0)[c]{$\cdot$}}
\put(35,35){\makebox(0,0){$\bullet$}}
\put(55,35){\makebox(0,0){$\bullet$}}
\put(45,15){\makebox(0,0){$\bullet$}}
\put(45,15){\line(1,2){10}}
\put(45,15){\line(-1,2){10}}
\put(45,15){\line(1,0){30}}
\put(72,35){\makebox(0,0)[c]{$\cdot$}}
\put(75,40){\makebox(0,0)[bc]{$\overbrace{\quad\quad}^{a_3-1}$}}
\put(75,35){\makebox(0,0)[c]{$\cdot$}}
\put(78,35){\makebox(0,0)[c]{$\cdot$}}
\put(65,35){\makebox(0,0){$\bullet$}}
\put(85,35){\makebox(0,0){$\bullet$}}
\put(75,15){\makebox(0,0){$\bullet$}}
\put(75,15){\line(1,2){10}}
\put(75,15){\line(-1,2){10}}
\put(105,5){\line(-3,1){30}}
\put(105,5){\line(-1,0){90}}
\put(102,25){\makebox(0,0)[c]{$\cdot$}}
\put(105,30){\makebox(0,0)[bc]{$\overbrace{\quad\quad}^{a_4-1}$}}
\put(105,25){\makebox(0,0)[c]{$\cdot$}}
\put(108,25){\makebox(0,0)[c]{$\cdot$}}
\put(95,25){\makebox(0,0){$\bullet$}}
\put(115,25){\makebox(0,0){$\bullet$}}
\put(105,5){\makebox(0,0){$\bullet$}}
\put(105,5){\line(1,2){10}}
\put(105,5){\line(-1,2){10}}
\end{picture}}
\begin{document}


 \author[T. Kadoi]{KADOI, Tomoe}
 \address[Kadoi]{Department of Mathematical Sciences\\
 Shinshu University\\
 3-1-1 Asahi, Matsumoto-shi, Nagano-ken, 390-8621, Japan.}
 \author[Y. Numata]{NUMATA, Yasuhide}
 \address[Numata]{Department of Mathematical Sciences\\
 Shinshu University\\
 3-1-1 Asahi, Matsumoto-shi, Nagano-ken, 390-8621, Japan.}
 \thanks{This work was supported by JSPS KAKENHI Grant Number 25800009}
 \subjclass[2010]{05C30, 05C70, 05C05, 05C50, 11A05, 11A55}
\keywords{Caterpillar trees; continuants; 
$Z$-indices; reduced Pythagorean triples;
 continued fractions}
\title[Graphs whose Hosoya indices are Pythagorean triples]{On graphs whose Hosoya indices are primitive Pythagorean triples}
\begin{abstract}
 We discuss families of triples of graphs whose Hosoya indices are
 primitive Pythagorean triples.
 Hosoya gave a method to construct such families of caterpillars,
 i.e., 
 trees whose vertices are within distance $1$ of a central path.
 He also pointed out a common structure to the families,
 and conjectured the uniqueness of the structure.
 In this paper, we give an answer to his conjecture.
\end{abstract}
\maketitle

\section{Introduction}
In this paper,
 we consider a non-directed simple graph $G=(V,E)$, i.e., 
a graph whose set of vertices is $V$ and whose set of non-directed edges is $E$
which contains no multiple edges and no self loops.
We can regard $E$ as a subset of $\Set{\ed{v}{w}\subset V|v\neq w}$.
For a graph $G=(V,E)$,
a matching in $G$ is a subset of $E$
such that any two distinct edges do not share any vertices.
The Hosoya index $Z(G)$, or the $Z$-index,
introduced in Hosoya \cite{hosoya1971topological}
is the number of matchings in $G$.
He introduced the graph invariant for his chemical studies,
and developed mathematical theory.
The main objects in this paper 
are triples $(A,B,C)$ of graphs 
such that $(Z(A),Z(B),Z(C))$ is 
\emph{a primitive Pythagorean triple},
i.e.,
a triple $(a,b,c)$ of positive integers
satisfying the following conditions:
\begin{itemize}
 \item $a^2+b^2=c^2$,
 \item $a$, $b$ and $c$ are relatively prime, and
 \item $a$ and $c$ are odd, $b$ is even.
\end{itemize}
In \cite{hosoya2007CCACAA, MR2513595},
Hosoya considered  primitive Pythagorean triples
with consecutive legs,
and gave a family of triples of graphs for them.
In \cite{MR2515294},
he showed that
the Hosoya indices of
a triple of graphs obtained by gluing copies of a graph in some manner 
satisfy the Pythagorean equation.
In \cite{MR2732476},
he introduced a method to construct a triple of graphs
whose Hosoya indices are a primitive Pythagorean triple.
He also conjectured the uniqueness of such families.
In this paper, we give an answer to his conjecture.

This paper is organized as follows:
In Section \ref{sec:def},
we define notation and give examples.
We show  main results in Section \ref{sec:main}.
In the proof, we use some technical lemmas shown in Section \ref{sec:techlemma}.
\section{Definition}
\label{sec:def}
First we define some typical graphs in our discussion.
\begin{definition}
 For positive integers $a_1,\ldots, a_d$,
 we define the \emph{caterpillar} $C(a_1,\ldots,a_d)$ to be the following graph $(V,E)$:
 \begin{align*}
  V_i=&\Set{(i,j)|j=1,\ldots,a_i},\\
  V=&\bigcup_{i=1}^d V_i,\\
  E_i=&\Set{\ed{(i,1)}{(i,j)}| 1< j \leq a_i},\\
  H=&\Set{\ed{(i,1)}{(i+1,1)}|i=1,\ldots,d-1},\\
  E=&H\cup \bigcup_{i=1}^d E_i.
 \end{align*}
\end{definition}
 The graph $(V_i,E_i)$ is the star graph or the complete bipartite graph $K_{1,a_i-1}$.
 The edges in $H$ connects the central vertices $(i,1)$ and
 $(i+1,1)$ of $(V_i,E_i)$ and $(V_{i+1},E_{i+1})$. 
 Hence $C(a_1,\ldots,a_d)$ is 
 \begin{align*}
  \figI.
 \end{align*}
 Note that the caterpillar $C(a_1,\ldots,a_{d-1},a_{d},1)$ is
 isomorphic to the caterpillar $C(a_1,\ldots,a_{d-1},a_{d}+1)$ as graphs.

\begin{definition}
 For positive integers $a_1,\ldots,a_d$ and $s$,
 we define the graph $Q_s(a_1,\ldots,a_d)$ to be 
 the graph obtained from the caterpillar $C(a_1,\ldots,a_d)$
 by adding the edge $\ed{(1,1)}{(s,1)}$.
\end{definition}

\begin{example}
Since the caterpillar $C(a_1,\ldots,a_d)$ is a tree,
the graph $Q_3(a_1,\ldots,a_d)$ has a unique cycle of length $3$.
The graph $Q_3(a_1,\ldots,a_d)$ is 
\begin{align*}
 \figII.
\end{align*}
 Note that the graph  $Q_3(a_1,a_2,a_3,\ldots,a_d)$ is
 isomorphic to the graph $Q_3(a_2,a_1,a_3,\ldots,a_d)$  as graphs.
\end{example}
\begin{example}
The graph
$Q_4(a_1,a_2,a_3,a_4)$ has a unique cycle of length $4$.
The graph $Q_4(a_1,a_2,a_3,a_4)$ is
\begin{align*}
 \figIII.
\end{align*}
 Note that 
$Q_4(a_1,a_2,a_3,a_4)$, 
$Q_4(a_2,a_3,a_4,a_1)$ and
$Q_4(a_4,a_3,a_2,a_1)$ 
 are
 isomorphic to one another as graphs.
\end{example}

\begin{definition} 
 For positive integers $a_1,\ldots,a_d$ and $s$,
we define the graph $\Theta_{s}(a_1,\ldots,a_d)$ to be the graph
 obtained from the caterpillar
 $C(a_1,\ldots,a_d)$
 by adding the edge $\ed{(1,1)}{(s,1)}$ and  $\ed{(1,1)}{(d,1)}$.
\end{definition}
\begin{example}
For four positive integers $a_1$, $a_2$, $a_3$ and $a_4$,
the graph $\Theta_{3}(a_1,a_2,a_3,a_4)$ is
\begin{align*}
 \figIV .
\end{align*}
\end{example}

In this paper, we discuss a common structure to a family of graphs.
To formulate such structures,
we define an operation to make a new graph from two graphs.
\begin{definition}
 Let $G$ and $H$ be graphs.
 For a vertex $v$ in $G$ and a vertex $h$ in $H$,
 we define a \emph{one-point union $G\ops{g}{h} H$ at base points $g$ and $h$}
 to be the 
 graph obtained from the disjoint union of $G$ and $H$
 by contracting the vertices $g$ and $h$.
\end{definition}
If the context makes base points clear,
then we will omit base points as $G \vee H$.

\begin{definition}
 For $i=1,\ldots,d$, 
 we write $C(a_1,\ldots,\dot a_i,\ldots,a_d)$
 to denote the caterpillar $C(a_1,\ldots,a_d)$
 with the base point $(i,1)$ to make a one-point union.
 Similarly,
 for $l$ and $r$ satisfying $l\leq r$,
 we write $C(a_1,\ldots,\dot a_l,\ldots,\dot a_r,\ldots,a_d)$
 to denote
 the caterpillar $C(a_1,\ldots,a_d)$
 with the base point $(l,1)$ to make a one-point union from the left
 and
 with the base point $(r,1)$ to make a one-point union from the right.
 If $l=r$, then we write
 $C(a_1,\ldots,\ddot a_l,\ldots,a_d)$.
 We also define this notation for $Q$
 in the same manner.
\end{definition}
\begin{example}
Let 
\begin{align*}
G&=C(\dot a_1,a_2,\ldots,a_{d-1},\dot a_{d}),\\
H&=C(\dot b_1,b_2,\ldots,b_{e-1},\dot b_{e}).
\end{align*}
In this case, we have
\begin{align*}
 G\vee H = C(\dot a_1,a_2,\ldots,a_{d-1},a_{d}+b_1-1,b_2,\ldots,b_{e-1},\dot b_{e}).
\end{align*}
\end{example}

\begin{definition}
 Let $S_i$ be a graph 
 with the base point $l_i$ to make a one-point union from the left
 and
 with the base point $r_i$ to make a one-point union from the right.
 We say that $(S_1,S_2,S_3)$ is a \emph{symmetric kernel} of 
 a triple $(G_1,G_2,G_3)$ of graphs
 if there exist a graph $G$ and a vertex $g$ of $G$ such that
 \begin{align*}
  G_i =G\ops{g}{l_i} S_i \ops{r_i}{g} G \quad (i=1,2,3).
 \end{align*}
\end{definition}

Next we recall the Hosoya index for a graph.
The Hosoya index is defined as follows.
\begin{definition}
  For a graph $G$,
  we call a set $M$ of edges a \emph{matching} in $G$
  if no pair of edges in $M$ share a vertex.
  Define $p(G,k)$ to be the number of matchings with $k$ edges in $G$.
  We also define 
 the \emph{Hosoya index} $Z(G)$ by $Z(G)=\sum_{k}p(G,k)$.
\end{definition}

\begin{remark}
  If $G'$ is a subgraph of $G$,
  then 
  a matching in $G'$ is a matching in $G$.
  Hence we have $p(G',k)\leq p(G,k)$,
  which implies $Z(G')\leq Z(G)$.
  Since $p(G,k)$ is the number of edges in $G$,
  the Hosoya index $Z(G)$ is greater than the number of edges in $G$.
\end{remark}

The following formulas for $Z(G)$ are known.
\begin{lemma}
If the graph $G$ is the disjoint union of graphs $G_1$ and $G_2$,
then
the graphs $G$, $G_1$ and $G_2$ satisfy the equation
\begin{align*}
 Z(G)=Z(G_1)\cdot Z(G_2).
\end{align*}
\end{lemma}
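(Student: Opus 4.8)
The plan is to prove the identity by a direct bijection between the matchings in $G$ and the pairs consisting of a matching in $G_1$ and a matching in $G_2$. Write $G_1=(V_1,E_1)$ and $G_2=(V_2,E_2)$, so that, since $G$ is their disjoint union, $V=V_1\disjointunion V_2$ and $E=E_1\disjointunion E_2$ with $V_1\cap V_2=\varnothing$. The decisive structural fact I would record first is that no edge in $E_1$ is incident to any vertex of $V_2$, and no edge in $E_2$ is incident to any vertex of $V_1$; this is exactly what the disjointness of the union buys us.

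Next I would define the map $\Phi$ sending a set $M\subseteq E$ to the pair $(M\cap E_1,\,M\cap E_2)$, and check that it restricts to a bijection from the matchings of $G$ onto the product of the set of matchings of $G_1$ with the set of matchings of $G_2$. In the forward direction, if $M$ is a matching in $G$, then $M\cap E_i$ is a subset of $M$ and hence again a matching (a subset of a set of pairwise vertex-disjoint edges is pairwise vertex-disjoint), so $\Phi(M)$ is of the required form. For surjectivity and injectivity I would exhibit the inverse $(M_1,M_2)\mapsto M_1\cup M_2$: the only point to verify is that the union of a matching $M_1$ in $G_1$ and a matching $M_2$ in $G_2$ is a matching in $G$, and this is immediate from the structural fact, since two edges of $M_1\cup M_2$ sharing a vertex would have to lie either both in the same $E_i$ (impossible, as $M_i$ is a matching) or one in $E_1$ and one in $E_2$ (impossible, as these have no vertex in common). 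That $\Phi$ and this map are mutually inverse is then a formal consequence of $E=E_1\disjointunion E_2$.

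Finally I would pass from the bijection to the numerical identity. Either I count directly, $Z(G)=\sum_{M}1=\sum_{(M_1,M_2)}1=\bigl(\sum_{M_1}1\bigr)\bigl(\sum_{M_2}1\bigr)=Z(G_1)\,Z(G_2)$, where the sums range over the matchings of the indicated graphs; or, keeping track of cardinalities, I note that $\Phi$ sends a $k$-edge matching to a pair whose sizes add to $k$, yielding
\begin{align*}
 p(G,k)=\sum_{i=0}^{k}p(G_1,i)\,p(G_2,k-i),
\end{align*}
and summing over $k$ turns this convolution into the product
\begin{align*}
 Z(G)=\sum_{k}p(G,k)=\Bigl(\sum_{i}p(G_1,i)\Bigr)\Bigl(\sum_{j}p(G_2,j)\Bigr)=Z(G_1)\,Z(G_2).
\end{align*}
There is no genuine obstacle here; the one point that actually requires the hypothesis — as opposed to holding for an arbitrary partition of the edge set — is the claim that $M_1\cup M_2$ is a matching, which is precisely where the disjointness of $V_1$ and $V_2$ enters, so I would make sure that step is stated explicitly rather than glossed over.
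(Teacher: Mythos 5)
Your proof is correct: the bijection $M\mapsto(M\cap E_1,M\cap E_2)$, with inverse $(M_1,M_2)\mapsto M_1\cup M_2$ justified by the disjointness of the vertex sets, is exactly the standard argument, and the passage to the product formula (directly or via the convolution of the $p(G,k)$) is sound. The paper states this lemma as a known formula and gives no proof of its own, so there is nothing to compare against; your write-up simply supplies the omitted details, and it does so completely.
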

\begin{lemma}
\label{lemma:removeedge}
Fix an edge $\ed{u}{v}$ of the graph $G$.
Let $G_{\ed{u}{v}}$ be the graph obtained from $G$ by removing the edge $\ed{u}{v}$,
and $G_{u,v}$ the restriction of $G$  to  vertices
other than $u$ and $v$.
The graphs $G$, $G'$ and $G''$ satisfy the equation
\begin{align*}
Z(G)=Z(G_{\ed{u}{v}}) + Z(G_{u,v}).
\end{align*}
\end{lemma}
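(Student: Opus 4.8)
The plan is to prove the recurrence by the standard matching-deletion argument: sort the matchings of $G$ according to whether or not they use the distinguished edge $\ed{u}{v}$, and match each class bijectively with the matchings of one of the two smaller graphs.

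First I would fix the edge $e=\ed{u}{v}$ and split the set of all matchings of $G$ into those that do not contain $e$ and those that do. A matching $M$ with $e\notin M$ is a subset of $E\setminus\Set{e}$, which is exactly the edge set of $G_{\ed{u}{v}}$; moreover the condition that no two edges of $M$ share a vertex is identical whether we view $M$ inside $G$ or inside $G_{\ed{u}{v}}$, since removing $e$ deletes no vertex. Hence the matchings of $G$ avoiding $e$ are precisely the matchings of $G_{\ed{u}{v}}$, and they number $Z(G_{\ed{u}{v}})$.

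Next I would treat the matchings $M$ with $e\in M$. Because $e$ already covers the vertices $u$ and $v$, no other edge of $M$ may be incident to $u$ or $v$; thus $M\setminus\Set{e}$ consists only of edges joining vertices different from $u$ and $v$, i.e.\ edges of $G_{u,v}$, and it is again a matching there. Conversely, adjoining $e$ to any matching of $G_{u,v}$ yields a matching of $G$ containing $e$. This gives a bijection $M\mapsto M\setminus\Set{e}$ between the matchings of $G$ that use $e$ and all matchings of $G_{u,v}$, so their number is $Z(G_{u,v})$. Adding the two counts yields $Z(G)=Z(G_{\ed{u}{v}})+Z(G_{u,v})$.

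I expect no genuine obstacle here; the only point needing a moment's care is the second case, where one must verify that fixing $e$ forbids every other edge touching $u$ or $v$ while imposing no constraint on edges disjoint from $\Set{u,v}$, which is precisely what makes the \emph{vertex} deletion $G_{u,v}$ the correct graph rather than an edge deletion. If a refinement by matching size is desired, the same partition shows $p(G,k)=p(G_{\ed{u}{v}},k)+p(G_{u,v},k-1)$, and summing over $k$ recovers the stated identity.
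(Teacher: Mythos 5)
Your proof is correct and complete: partitioning the matchings of $G$ by whether they contain $\ed{u}{v}$, identifying the first class with the matchings of $G_{\ed{u}{v}}$ and the second class (via $M\mapsto M\setminus\{\ed{u}{v}\}$) with the matchings of $G_{u,v}$, is exactly the standard argument, and your refinement $p(G,k)=p(G_{\ed{u}{v}},k)+p(G_{u,v},k-1)$ is also right. Note that the paper states this lemma without proof, as a known formula of Hosoya, so your argument simply supplies the canonical justification that the paper leaves implicit.
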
 
Thanks to these two formulas, we can calculate the Hosoya index recursively.
\begin{example}
\label{ex:list:bynum}
The following are the complete list of connected graphs 
with at most three edges:
 \begin{align*}
  Z(C(1)) &=1,&
  Z(C(2))&=2,&
  Z(C(3))&=3,\\
  Z(C(2,2))&=5,&
  Z(C(4))&=4,&
  Z(Q_3(1,1,1))&=4.
 \end{align*}
The following are the complete list of connected graphs 
with four edges:
 \begin{align*}
  Z(C(2,1,2))&=8,&
  Z(C(2,3))&=7,&
  Z(C(5))&=5,\\
  Z(Q_3(1,1,2))&=6,&
  Z(Q_4(1,1,1,1))&=7.
 \end{align*}
The following are the complete list of caterpillar graphs 
with five edges:
 \begin{align*}
  Z(C(2,1,1,2))&=13,&
  Z(C(2,1,3))&=11,&
  Z(C(2,2,2))&=12,&\\
  Z(C(2,4))&=9,&
  Z(C(3,3))&=10,&
  Z(C(6))&=6.
 \end{align*}
The following are the complete list of connected graphs 
with five edges containing $Q_3(1,1,1)$ as a subgraph:
 \begin{align*}
  Z(Q_3(1,1,1;2))&=10,&
  Z(Q_3(1,1,3))&=8,&
  Z(Q_3(2,1,2))&=9,&\\
  Z(\Theta_{3}(1,1,1,1))&=8. &
 \end{align*}
The following are the complete list of the other connected graphs 
with five edges:
 \begin{align*}
  Z(Q_4(1,1,1,2))&=10,&
  Z(Q_5(1,1,1,1,1))&=11.
 \end{align*}
See also Section \ref{subsec:tableofindex}.
\end{example}

Finally we define notation for continued fractions and continuants.
\begin{definition}
For positive integers $a_1,\ldots,a_d$,
we define a rational number $[a_1,\ldots,a_d]$ by
\begin{align*}
[a_1,\ldots,a_d]
=
a_1+\cfrac{1}{
a_2+\cfrac{1}{
\ddots\phantom{+}\cfrac{\ddots}{
a_{d-1}+\cfrac{1}{
a_d
}
}
}
}.
\end{align*}
\end{definition}
Note that $[a_1,\ldots,a_{d-1},a_d,1]=[a_1,\ldots,a_{d-1},a_d+1]$.
\begin{definition}
 We define the polynomials
 $K_d$ in variables $x_1,x_2,\ldots$ 
 by 
 \begin{align*}
  K_0  &= 1,\\ 
  K_1  &=x_1,\\
  K_d  &=x_dK_{d-1}+K_{d-2}.
 \end{align*}
 The polynomial $K_d$ is called the \emph{continuant}.
 For positive integers $a_1,\ldots,a_d$,
 we write $K(a_1,\ldots,a_d)$
 to denote the integer obtained by substituting
 $a_i$ for $x_i$ in $K_d$.
\end{definition}
It is known that 
$K(a_1,\ldots,a_d)$ and $K(a_2,\ldots,a_d)$ are relatively prime
integers satisfying
\begin{align*}
[a_1,\ldots,a_d]=\frac{K(a_1,\ldots,a_d)}{K(a_2,\ldots,a_d)}
\end{align*}
for positive integers $a_1,\ldots,a_d$.
In \cite{MR2416190}, Hosoya showed that
\begin{align*}
K(a_1,\ldots,a_d)=Z(C(a_1,\ldots,a_d)).
\end{align*}
Since $K(a_1,\ldots,a_d)=K(a_d,\ldots,a_1)$,
it also follows that
\begin{align*}
 K(a_1,a_2,\ldots,a_d)&=a_1K(a_2,\ldots,a_d)+K(a_3,\ldots,a_d).
\end{align*} 

\begin{lemma}
 \label{lemma:k:induction}
 If positive integers $a_1,\ldots,a_d$, $b_1,\ldots,b_e$ satisfy
 \begin{align*}
  K(a_1,\ldots,a_d)&=K(b_1,\ldots,b_e),\\
  K(a_2,\ldots,a_d)&=K(b_2,\ldots,b_e),\\
  a_d,b_e&>1,
 \end{align*}
 then 
 $d=e$ and $(a_1,\ldots,a_d)=(b_1,\ldots,b_e)$.
\end{lemma}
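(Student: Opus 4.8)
The plan is to prove the lemma by induction on $d$, turning the continuant recursion $K(a_1,\ldots,a_d)=a_1K(a_2,\ldots,a_d)+K(a_3,\ldots,a_d)$ into a statement about the division algorithm. Dividing the two hypotheses gives $[a_1,\ldots,a_d]=[b_1,\ldots,b_e]$, so morally the lemma is the uniqueness of the continued-fraction expansion of a rational number under the normalisation that the last partial quotient exceeds $1$; I will, however, argue directly with the integer recursion so as never to leave $\ZZ$. Writing $N=K(a_1,\ldots,a_d)=K(b_1,\ldots,b_e)$ and $D=K(a_2,\ldots,a_d)=K(b_2,\ldots,b_e)$, the recursion reads $N=a_1D+K(a_3,\ldots,a_d)=b_1D+K(b_3,\ldots,b_e)$, and the whole point is to recognise these as two expressions of the division of $N$ by $D$.

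The crucial estimate I need is $0\le K(a_3,\ldots,a_d)<K(a_2,\ldots,a_d)$ (and the same for the $b$'s), for then $a_1$ is forced to be the quotient $\lfloor N/D\rfloor$ and $K(a_3,\ldots,a_d)$ the remainder of $N$ divided by $D$. Since every continuant of a positive sequence is positive and $K$ of the empty sequence equals $1$, the lower bound is immediate. For the strict upper bound I expand $K(a_2,\ldots,a_d)=a_2K(a_3,\ldots,a_d)+K(a_4,\ldots,a_d)$ and observe that the trailing continuant is at least $1$; this is exactly where the hypothesis $a_d>1$ enters, covering also the short case $d=2$, where $K(a_3,\ldots,a_d)=1<a_2=a_d$. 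The same computation shows $K(a_2,\ldots,a_d)\ge 2$ whenever $d\ge 2$, which, via $D=K(b_2,\ldots,b_e)$, forces $e\ge 2$ as well, so that $b_2$ and $b_3$ are genuinely available in the step.

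With the estimate in hand, uniqueness of quotient and remainder gives $a_1=b_1$ and $K(a_3,\ldots,a_d)=K(b_3,\ldots,b_e)$ in one stroke. Combined with the hypothesis $K(a_2,\ldots,a_d)=K(b_2,\ldots,b_e)$, this shows that the shorter sequences $(a_2,\ldots,a_d)$ and $(b_2,\ldots,b_e)$ satisfy the full hypotheses of the lemma, their last entries being the unchanged $a_d,b_e>1$. The inductive hypothesis then yields $d-1=e-1$ and $(a_2,\ldots,a_d)=(b_2,\ldots,b_e)$, which together with $a_1=b_1$ completes the step. The base case $d=1$ is handled directly rather than through the recursion: then $D=K(a_2,\ldots,a_d)$ is $K$ of the empty sequence, hence $K(b_2,\ldots,b_e)=1$, which forces $e=1$, and finally $a_1=N=b_1$.

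I expect the main obstacle to be bookkeeping at the boundary rather than any genuine difficulty: one must treat the degenerate continuants carefully (in particular $K$ of the empty sequence, which is $1$, and the short cases $d\le 2$), and verify that the remainder inequality is strict. That strictness is the entire content of the normalisation $a_d,b_e>1$; without it uniqueness fails, as the identity $[a_1,\ldots,a_{d-1},a_d]=[a_1,\ldots,a_{d-1},a_d-1,1]$ already noted in the text shows. Consequently any correct argument must invoke that hypothesis precisely at the step that bounds the remainder, and I will make sure the write-up isolates that use cleanly.
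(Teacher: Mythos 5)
Your proof is correct and takes essentially the same route as the paper's: induction on the length, where the recursion $K(a_1,\ldots,a_d)=a_1K(a_2,\ldots,a_d)+K(a_3,\ldots,a_d)$ together with the strict bound $0\le K(a_3,\ldots,a_d)<K(a_2,\ldots,a_d)$ forces $a_1=b_1$ and $K(a_3,\ldots,a_d)=K(b_3,\ldots,b_e)$, after which one recurses on the tails. The only cosmetic differences are that the paper phrases the key step as $(b_1-a_1)n$ being an integer multiple of $n$ lying strictly between $-n$ and $n$ rather than as uniqueness of Euclidean quotient and remainder, and that it takes $e=1$ rather than $d=1$ as the base case; your write-up is, if anything, more explicit than the paper's about exactly where the normalisation $a_d,b_e>1$ is needed.
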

\begin{proof}
 First consider the case where $e=1$.
 In this case, 
 $K(b_1,\ldots,b_e)=K(b_1)=b_1$
 and 
 $K(b_2,\ldots,b_e)=K_0=1$.
 If $d\geq 2$, 
 then we have 
\begin{align*}
K(a_2,\ldots,a_d)\geq K(a_d)=a_d>1,  
\end{align*}
 which contradicts the condition.
 If $d=1$, then
 we have $K(a_1)=a_1$, which implies $a_1=b_1$.

 
 We consider the case where $d \geq e\geq 2$.
 Let 
\begin{align*}
m=K(a_1,\ldots,a_d)=K(b_1,\ldots,b_e),\\ 
n=K(a_2,\ldots,a_d)=K(b_2,\ldots,b_e).
\end{align*}
 It follows  by the definition of continuants that
 \begin{align*}
  m&=K(a_1,\ldots,a_d)=a_1K(a_2,\ldots,a_d)+K(a_3,\ldots,a_d)\\
  &=a_1n+K(a_3,\ldots,a_d),\\
  m&=K(b_1,\ldots,b_e)=b_1K(b_2,\ldots,b_e)+K(b_3,\ldots,b_e)\\
  &=b_1n+K(b_3,\ldots,b_e).
 \end{align*}
 Hence we have
 \begin{align*}
  (b_1-a_1)n=K(a_3,\ldots,a_d)-K(b_3,\ldots,b_e).
 \end{align*}
 Since both $K(a_3,\ldots,a_d)$ and $K(b_3,\ldots,b_e)$
 are less than 
 \begin{align*}
  n=K(a_2,\ldots,a_d)=K(b_2,\ldots,b_e),
 \end{align*}
 we have the inequality
 \begin{align*}
  -n<K(a_3,\ldots,a_d)-K(b_3,\ldots,b_e)< n.
 \end{align*}
 Hence $b_1-a_1=0$.
 It also follows that
 \begin{align*}
  K(a_3,\ldots,a_d)-K(b_3,\ldots,b_e)=0. 
 \end{align*}
 Hence we have the lemma by induction.
\end{proof}

Lemma \ref{lemma:k:induction} implies the following lemma.
\begin{lemma}
 \label{lemma:cat:induction}
 Let $G=C(a_1,\ldots,a_d)$ and $G'=C(b_1,\ldots,b_e)$.
 Assume that $d\geq e\geq 2$.
 If  \begin{align*}
  Z(C(a_1,\ldots,a_d))&=Z(C(b_1,\ldots,b_e)),\\
  Z(C(a_2,\ldots,a_d))&=Z(C(b_2,\ldots,b_e)),
 \end{align*}
 then the caterpillar $G$ is isomorphic to
 the caterpillar $G'$ 
 as graphs, and $a_i=b_i$ for each $i=1,\ldots, e-1$. 
\end{lemma}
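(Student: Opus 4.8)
The plan is to translate everything into continuants via Hosoya's identity $Z(C(c_1,\ldots,c_n))=K(c_1,\ldots,c_n)$, to reuse the estimate inside the proof of Lemma \ref{lemma:k:induction} by peeling coordinates from the left, and to absorb the trailing coordinate by hand at the very end. First I would rewrite the two hypotheses as
\[
K(a_1,\ldots,a_d)=K(b_1,\ldots,b_e),\qquad K(a_2,\ldots,a_d)=K(b_2,\ldots,b_e).
\]

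To obtain $a_i=b_i$ for $i\le e-1$ I would run the computation of Lemma \ref{lemma:k:induction} one coordinate at a time, inducting on $i$. At level $i$ I would have the two equalities $K(a_i,\ldots,a_d)=K(b_i,\ldots,b_e)$ and $K(a_{i+1},\ldots,a_d)=K(b_{i+1},\ldots,b_e)=:n$; expanding both full continuants via $K(c_i,\ldots)=c_iK(c_{i+1},\ldots)+K(c_{i+2},\ldots)$ gives $(a_i-b_i)\,n=K(b_{i+2},\ldots,b_e)-K(a_{i+2},\ldots,a_d)$. Since every continuant of a positive sequence is positive and a tail continuant is strictly smaller than the whole, the right-hand side lies strictly between $-n$ and $n$, forcing $a_i=b_i$ and propagating the level-$(i+2)$ equality $K(a_{i+2},\ldots,a_d)=K(b_{i+2},\ldots,b_e)$ that feeds the next step. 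This step is legitimate exactly while the shorter tail $(b_{i+1},\ldots,b_e)$ is nonempty, i.e. for $i=1,\ldots,e-1$, so that no assumption on the last coordinates is used, and it yields $a_i=b_i$ for all $i\le e-1$.

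Carrying the peeling one step past the shorter sequence, the level-$(e+1)$ equality reads $K(a_{e+1},\ldots,a_d)=K_0=1$. Because a continuant of a positive sequence of length at least two is at least $2$, this forces $d-e\le 1$, and with $d\ge e$ we get $d\in\{e,e+1\}$. If $d=e$, the level-$e$ equality is $a_e=K(a_e)=K(b_e)=b_e$, so the tuples coincide and $G=G'$. If $d=e+1$, then $a_d=1$, and the level-$e$ equality $K(a_e,1)=a_e+1=b_e$ combined with the isomorphism $C(\ldots,a_e,1)\cong C(\ldots,a_e+1)$ recorded after the definition of the caterpillar gives $G\cong C(a_1,\ldots,a_{e-1},a_e+1)=C(b_1,\ldots,b_e)=G'$. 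Either way $G\cong G'$, and both assertions follow.

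The main obstacle is precisely the gap between Lemma \ref{lemma:k:induction}, which assumes $a_d,b_e>1$ and concludes equality of the full tuples, and the present statement, which permits trailing $1$'s and asserts only the isomorphism together with agreement of the first $e-1$ coordinates. I therefore do not invoke Lemma \ref{lemma:k:induction} as a black box — its tail hypothesis may fail — but reuse its internal inequality, stopping the induction before the ambiguous last coordinate and letting the caterpillar isomorphism $C(\ldots,c,1)\cong C(\ldots,c+1)$ resolve the ambiguity once, at the end. As an alternative packaging of the isomorphism I could instead strip every trailing $1$ from both caterpillars at the outset; one checks that a single such step preserves both continuant equalities (including the degenerate length-two case), so the reduced tuples have last coordinate $>1$ and Lemma \ref{lemma:k:induction} applies directly to show that the reduced tuples, hence $G$ and $G'$, agree.
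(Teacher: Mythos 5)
Your proposal is correct, but your primary argument takes a genuinely different route from the paper. The paper treats Lemma \ref{lemma:k:induction} as a black box and reduces to it: if $a_d,b_e>1$ it applies the lemma directly; if a trailing coordinate equals $1$ (and $e>2$) it strips it via the caterpillar isomorphism $C(\ldots,c,1)\cong C(\ldots,c+1)$, which preserves both continuant equalities; and it disposes of the degenerate case $e=2$ separately by noting $Z(C(b_2))=1$ forces $G=C(a_1,1)$. That is exactly your ``alternative packaging'' at the end. Your main argument instead re-runs the peeling computation from inside the proof of Lemma \ref{lemma:k:induction}, stops it at $i=e-1$ so that no hypothesis on the last coordinates is ever invoked, and settles the tail by hand from the propagated equality $K(a_{e+1},\ldots,a_d)=1$, which yields $d\in\{e,e+1\}$ and the isomorphism in each case. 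Your route is more self-contained, avoids the case split on trailing $1$'s, and delivers the assertion $a_i=b_i$ for $i\le e-1$ directly, where the paper obtains it only implicitly from equality of the stripped tuples; the paper's route is shorter because the hard work is already recorded in Lemma \ref{lemma:k:induction}. One justification in your peeling step does need a small repair: the blanket claim that ``a tail continuant is strictly smaller than the whole'' is false precisely in the edge situation this lemma is about (e.g.\ $K_0=1=K(b_e)$ when $b_e=1$, which can occur at $i=e-1$). What is true, and suffices, is the non-strict bound $K(c_{i+2},\ldots)\le K(c_{i+1},\ldots)$ together with the observation that, for $i\le e-1$, both subtracted continuants are continuants of sequences of length at least $0$ and are therefore at least $1$; this gives $\lvert K(b_{i+2},\ldots,b_e)-K(a_{i+2},\ldots,a_d)\rvert\le n-1<n$, and your induction then goes through unchanged.
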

\begin{proof}
 If $a_d$ and $b_e$ are greater than $1$,
 then it follows from Lemma \ref{lemma:k:induction}
 that $G$ is isomorphic to $G'$ as graphs.
 Consider the case where $b_e=1$.
 If $e> 2$, then
 we have 
 \begin{align*}
 C(b_1,\ldots,b_{e-1},1)&=C(b_1,\ldots,b_{d-1}+1),\\
 C(b_2,\ldots,b_{e-1},1)&=C(b_2,\ldots,b_{d-1}+1).
 \end{align*}
 Hence we may assume that $b_e>1$.
 Similarly we may assume that $a_d>1$.
 Hence it follows from Lemma \ref{lemma:k:induction}
 that $G$ is isomorphic to $G'$ as graphs.
 If $e=2$, then  $Z(C(a_2,\ldots,a_d))=Z(C(b_2))=1$.
 Hence the graph $G$ is the caterpillar $C(a_1,1)$. Since $Z(C(a_1,1))=Z(C(b_1,1))$,
 we have
 $a_1+1=b_1+1$, which implies $a_1=b_1$.
\end{proof}
\section{Main results}
\label{sec:main}
We consider a family of triples $(A_{m,n},B_{m,n},C_{m,n})$ of
graphs which gives
primitive Pythagorean triples as $(Z(A_{m,n}),Z(B_{m,n}),Z(C_{m,n}))$,
and the common symmetric kernel to the family.
In other words, we consider the triple $(A,B,C)$ of graphs with base points
and the family of graphs $G_{m,n}$ with base points satisfying
\begin{align*}
 A_{m,n} &= G_{m,n}\vee A \vee G_{m,n},\\
 B_{m,n} &= G_{m,n}\vee B \vee G_{m,n},\\
 C_{m,n} &= G_{m,n}\vee C \vee G_{m,n}.
\end{align*}

Let $\PPP$ be the set of the pair $(m,n)$ of positive integers 
satisfying the following:
\begin{itemize}
 \item $m$ and $n$ are relatively prime,
 \item $m$ and $n$ have opposite parity,
 \item $m>n$.
\end{itemize}
It is known that
there exists a bijection $\varphi$ from $\PPP$ to 
the set of primitive Pythagorean triples
defined by $\varphi(m,n)=(m^2-n^2,2mn,m^2+n^2)$.

Define $\PPP_1$ and $\PPP_2$ by
\begin{align*}
\PPP_1&=\Set{(m,n)\in\PPP| \frac{m}{n} \geq 2}\\
\PPP_2&=\Set{(m,n)\in\PPP|1< \frac{m}{n} <2}. 
\end{align*}
In \cite{MR2732476},
Hosoya developed a method,
based on Euclidean algorithm,
to construct
a triple of caterpillars 
whose Hosoya indices are
primitive Pythagorean triples
corresponding to $\PPP_1$.
He also gave one for $\PPP_2$.
Here we show  essentially the same propositions as 
his methods.
\begin{proposition}
 \label{prop:exist:1}
 Let $(m,n)\in \PPP_1$ and positive integers $a_1,\ldots,a_d$
 satisfy $m/n=[a_1,\ldots,a_d]$.
 Note that $a_1\geq 2$.
 Define $G^{(1)}_{m,n}$, $A_{m,n}$, $B_{m,n}$, $C_{m,n}$
 by
 \begin{align*}
  G^{(1)}_{m,n}&=C(\dot{a_1-1},a_2,\ldots,a_d),\\
  A_{m,n}  &=G^{(1)}_{m,n}\vee C(\dot1,1,\dot1)\vee G^{(1)}_{m,n},\\
  B_{m,n}  &=G^{(1)}_{m,n}\vee C(\ddot4)\vee G^{(1)}_{m,n},\\
  C_{m,n}  &=G^{(1)}_{m,n}\vee C(\dot2,\dot2)\vee G^{(1)}_{m,n}.
 \end{align*}
 The triple $(Z(A_{m,n}),Z(B_{m,n}),Z(C_{m,n}))$ of Hosoya indices 
 is the primitive Pythagorean triple $(m^2-n^2,2mn,m^2+n^2)$.
\end{proposition}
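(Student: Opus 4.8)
The plan is to reduce the three Hosoya indices to a single quadratic form evaluated on two invariants of the repeated factor $G^{(1)}_{m,n}$, together with a short boundary computation for each of the three kernels $C(\dot{1},1,\dot{1})$, $C(\ddot{4})$, $C(\dot{2},\dot{2})$. Write $G=G^{(1)}_{m,n}$ with base point $g=(1,1)$, and set $a=Z(G)$ and $b=Z(G-g)$, where $G-g$ denotes the restriction of $G$ to the vertices other than $g$. First I would compute these two numbers. Using $Z(C(a_1,\ldots,a_d))=K(a_1,\ldots,a_d)$ and the continuant recursion,
\[
a=K(a_1-1,a_2,\ldots,a_d)=(a_1-1)K(a_2,\ldots,a_d)+K(a_3,\ldots,a_d)=K(a_1,\ldots,a_d)-K(a_2,\ldots,a_d).
\]
Since $m/n=[a_1,\ldots,a_d]=K(a_1,\ldots,a_d)/K(a_2,\ldots,a_d)$ is a ratio of coprime integers and $m,n$ are coprime, we read off $m=K(a_1,\ldots,a_d)$ and $n=K(a_2,\ldots,a_d)$, so $a=m-n$. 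For $b$, deleting the center $g$ isolates the $a_1-2$ leaves of the first star (each contributing a factor $1$) and leaves the subgraph $C(a_2,\ldots,a_d)$, whence $b=K(a_2,\ldots,a_d)=n$.

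Next I would record how $Z$ behaves under a one-point union. From the vertex-deletion identity $Z(H)=Z(H-v)+\sum_{w\sim v}Z(H_{v,w})$ (itself a consequence of Lemma \ref{lemma:removeedge} applied to each edge at $v$) together with multiplicativity over disjoint unions, one obtains, for graphs with base points $g,h$,
\[
Z(G\vee H)=Z(G)Z(H-h)+Z(H)Z(G-g)-Z(G-g)Z(H-h).
\]
Applying this while keeping the identified vertex fixed, I would classify the matchings of a symmetric graph $G\vee S\vee G$ by whether the left base point $l$ and the right base point $r$ of the kernel $S$ are covered from inside $S$: whenever a base point of $S$ is covered by an $S$-edge, the copy of $G$ glued there must leave its own base point uncovered (factor $b$), and otherwise that copy is unconstrained (factor $a$). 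Summing over the four coverage patterns yields
\[
Z(G\vee S\vee G)=Z(S-l-r)\,a^2+\bigl(Z(S-l)+Z(S-r)-2Z(S-l-r)\bigr)ab+\bigl(Z(S)-Z(S-l)-Z(S-r)+Z(S-l-r)\bigr)b^2.
\]

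Then I would evaluate this form on each kernel. For $S=C(\dot{1},1,\dot{1})$, the path $l-c-r$, the matchings $\varnothing,\{l,c\},\{c,r\}$ give $Z(S)=3$, $Z(S-l)=Z(S-r)=2$, $Z(S-l-r)=1$, so the form equals $a^2+2ab=(m-n)(m+n)=m^2-n^2$. For $S=C(\dot{2},\dot{2})$, the path on four vertices with the two inner vertices as base points, one finds $Z(S)=5$, $Z(S-l)=Z(S-r)=2$, $Z(S-l-r)=1$, giving $a^2+2ab+2b^2=(m-n)^2+2(m-n)n+2n^2=m^2+n^2$. The kernel $C(\ddot{4})$ is the one case needing a separate argument, since its two base points coincide at the center $c$ of $K_{1,3}$; classifying matchings by whether $c$ is covered from $S$, from one of the two copies of $G$, or not at all, gives $Z=Z(S)b^2+2Z(S-c)\,b(a-b)=4b^2+2b(a-b)=2b(a+b)=2mn$. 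Collecting the three values produces $(m^2-n^2,2mn,m^2+n^2)=\varphi(m,n)$, the asserted primitive Pythagorean triple.

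I expect the main obstacle to be precisely the coincident-base-point kernel $C(\ddot{4})$: the two-variable quadratic form derived for distinct base points does not apply verbatim, so one must rerun the matching classification at the single shared vertex. A secondary point requiring care is the identification $a=m-n$, which hinges on reading $m$ and $n$ as the coprime continuants $K(a_1,\ldots,a_d)$ and $K(a_2,\ldots,a_d)$ and then invoking the continuant recursion.
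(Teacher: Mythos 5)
Your proof is correct, and it takes a genuinely different route from the paper's. The paper writes each of $A_{m,n}$, $B_{m,n}$, $C_{m,n}$ explicitly as a long symmetric caterpillar, splits it with the edge-deletion identity of Lemma \ref{lemma:removeedge} applied at a central edge, and then finishes each of the three cases with its own continuant manipulation --- for instance $Z(B_{m,n})$ requires the ad hoc identity $Z(C(2a_1,a_2,\ldots,a_d))+Z(C(a_3,\ldots,a_d))=2m$ to handle the doubled entry $2a_1$. You instead extract only two invariants of the glued graph, $a=Z(G)=m-n$ and $b=Z(G-g)=n$ (these identifications agree with the paper's computations $K(a_1-1,a_2,\ldots,a_d)=m-n$ and $K(a_2,\ldots,a_d)=n$), and derive a transfer formula expressing $Z(G\vee S\vee G)$ as a quadratic form in $(a,b)$ whose coefficients are the vertex-deletion invariants of the kernel $S$. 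Your classification of matchings by how the two junction vertices are covered is valid; your separate treatment of $C(\ddot4)$, where the two base points coincide so the two-base-point form does not apply verbatim, is exactly the right correction; and all three kernel evaluations ($a^2+2ab=m^2-n^2$, $2b(a+b)=2mn$, $a^2+2ab+2b^2=m^2+n^2$) check out. What your approach buys: it is modular and uniform, it isolates the structural reason why any symmetric kernel produces a quadratic form in $(m-n,n)$, and it works verbatim for the $\PPP_2$ construction and for non-caterpillar choices of $G$ (such as the paper's later example with $Q_4(\dot1,1,1,1)$), since it never needs the glued graphs to be caterpillars. What the paper's approach buys: it stays entirely inside continuant arithmetic, which is the toolkit reused elsewhere in the paper (e.g., Lemma \ref{lemma:k:induction} and the uniqueness theorems), so its proof doubles as a warm-up for those arguments.
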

\begin{proof}
 Since $m$ and $n$ are relatively prime, 
 we have
 \begin{align*}
  Z(C(a_1,\ldots,a_d)))&=
  K(a_1,\ldots,a_d)=m,\\
  Z(C(a_2,\ldots,a_d)))&=
  K(a_2,\ldots,a_d)=n.
 \end{align*}
 Since $G^{(1)}_{m,n}=C(\dot{a_1-1},a_2,\ldots,a_d)$,
 we obtain
 \begin{align*}
  A_{m,n}  &=G^{(1)}_{m,n}\vee C(\dot1,1,\dot1)\vee G^{(1)}_{m,n}\\
  &=C(a_d,a_{d-1},\ldots,a_2,a_1-1,1,a_1-1,a_2,\ldots,a_{d-1},a_d),\\
  B_{m,n}  &=G^{(1)}_{m,n}\vee C(\ddot4)\vee G^{(1)}_{m,n}\\
  &=C(a_d,a_{d-1},\ldots,a_3,a_2,2a_1,a_2,a_3,\ldots,a_{d-1},a_d),\\
  C_{m,n}  &=G^{(1)}_{m,n}\vee C(\dot2,\dot2)\vee G^{(1)}_{m,n}\\
  &=C(a_d,a_{d-1},\ldots,a_3,a_2,a_1,a_1,a_2,a_3,\ldots,a_{d-1},a_d).
 \end{align*}
 Applying Lemma \ref{lemma:removeedge} to the central edge
 $\ed{(d,1)}{(d+1,1)}$  of $C_{m,n}$,
 we obtain
 \begin{align*}
  &Z(C_{m,n})\\  
  =&Z(C(a_d,a_{d-1},\ldots,a_3,a_2,a_1,a_1,a_2,a_3,\ldots,a_{d-1},a_d))\\
  =&Z(C(a_d,a_{d-1},\ldots,a_3,a_2,a_1)\cdot Z(C(a_1,a_2,a_3,\ldots,a_{d-1},a_d)\\
  &+Z(C(a_d,a_{d-1},\ldots,a_3,a_2))\cdot Z(C(a_2,a_3,\ldots,a_{d-1},a_d)\\
  =&m^2+n^2.
 \end{align*}
Similarly,
since 
\begin{align*}
& Z(C(2a_1,a_2,a_3,\ldots,a_d))+ Z(C(a_3,\ldots,a_d))\\
&= 2a_1Z(C(a_2,a_3,\ldots,a_d))+ Z(C(a_3,\ldots,a_d))+ Z(C(a_3,\ldots,a_d))\\
&= 2(a_1Z(C(a_2,a_3,\ldots,a_d))+ Z(C(a_3,\ldots,a_d)))\\
&= 2Z(C(a_1,a_3,\ldots,a_d))=2m,
\end{align*}
 we obtain
\begin{align*}
 &Z(B_{m,n}) \\
=&Z(C(a_d,a_{d-1},\ldots,a_3,a_2,2a_1,a_2,a_3,\ldots,a_{d-1},a_d))\\
=&Z(C(a_d,a_{d-1},\ldots,a_3,a_2,2a_1))\cdot Z(C(a_2,a_3,\ldots,a_{d-1},a_d))\\
&+Z(C(a_d,a_{d-1},\ldots,a_3,a_2))\cdot Z(C(a_3,\ldots,a_{d-1},a_d))\\
=&Z(C(2a_1,a_2,\ldots,a_d))\cdot n+n\cdot Z(C(a_3,\ldots,a_d))\\
=&n(Z(C(2a_1,a_2,\ldots,a_d))+ Z(C(a_3,\ldots,a_d)))\\
=&2mn.
\end{align*}
 Since $K(a_1,\ldots,a_d)=a_1K(a_2,\ldots,a_d)+K(a_3,\ldots,a_d)$,
 we have
\begin{align*}
 K(a_1-1,\ldots,a_d)
 &= (a_1-1)K(a_2,\ldots,a_d)+K(a_3,\ldots,a_d)\\
 &= a_1K(a_2,\ldots,a_d)+K(a_3,\ldots,a_d)-K(a_2,\ldots,a_d)\\
 &= K(a_1,\ldots,a_d)-K(a_2,\ldots,a_d)\\
 &=m-n.
\end{align*}
 Hence it follows that
 \begin{align*}
  &Z(A_{m,n})  \\
  =&Z(C(a_d,a_{d-1},\ldots,a_2,a_1-1,1,a_1-1,a_2,\ldots,a_{d-1},a_d))\\
  =&
  Z(C(a_d,a_{d-1},\ldots,a_2,a_1-1))\cdot Z(C(1,a_1-1,a_2,\ldots,a_{d-1},a_d))\\
  &+Z(C(a_d,a_{d-1},\ldots,a_2))\cdot Z(C(a_1-1,a_2,\ldots,a_{d-1},a_d))\\
  =&
  (m-n)\cdot Z(C(1,a_1-1,a_2,\ldots,a_d))+Z(C(a_2,\ldots,a_d))\cdot (m-n)\\
  =& (m-n) ( Z(C(a_1,a_2,\ldots,a_d))+Z(C(a_2,\ldots,a_d)))\\
  =&(m-n)(m+n)=m^2+n^2.
\qedhere \end{align*}\end{proof}
\begin{proposition}
 Let $(m,n)\in \PPP_2$ and positive integers $a_1,\ldots,a_d$
 satisfy $m/n=[a_1,\ldots,a_d]$.
 Define $G^{(2)}_{m,n}$, $A_{m,n}$, $B_{m,n}$, $C_{m,n}$
 by
 \begin{align*}
  G^{(2)}_{m,n}&=C(\dot a_2,a_3,\ldots,a_d),\\
  A_{m,n}&=G^{(2)}_{m,n}\vee C(\ddot3)\vee G^{(2)}_{m,n},\\
  B_{m,n}&=G^{(2)}_{m,n}\vee C(\dot1,2,\dot1)\vee G^{(2)}_{m,n},\\
  C_{m,n}&=G^{(2)}_{m,n}\vee C(\dot1,1,1,\dot1)\vee G^{(2)}_{m,n}.
 \end{align*}
 The triple $(Z(A_{m,n}),Z(B_{m,n}),Z(C_{m,n}))$ of Hosoya indices 
 is the primitive Pythagorean triple $(m^2-n^2,2mn,m^2+n^2)$.
\end{proposition}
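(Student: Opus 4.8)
The plan is to follow the template of the proof of Proposition~\ref{prop:exist:1}: first rewrite the three one-point unions as ordinary caterpillars, and then evaluate each Hosoya index by a single application of Lemma~\ref{lemma:removeedge} to a central edge, so that everything reduces to continuants that collapse through the recursion for $K$. I would begin by recording the two basic values. Since $(m,n)\in\PPP_2$ we have $1<m/n<2$, so $a_1=1$ is forced and $m/n=[1,a_2,\ldots,a_d]$. Because $m$ and $n$ are relatively prime, this gives $Z(G^{(2)}_{m,n})=Z(C(a_2,\ldots,a_d))=K(a_2,\ldots,a_d)=n$, and from $m=K(1,a_2,\ldots,a_d)=K(a_2,\ldots,a_d)+K(a_3,\ldots,a_d)$ also $Z(C(a_3,\ldots,a_d))=K(a_3,\ldots,a_d)=m-n$.

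Next I would make the unions explicit. The key point for $A_{m,n}$ is that the two base points of $C(\ddot3)$ are one and the same vertex, so both copies of $G^{(2)}_{m,n}$ attach there and the two $a_2$-stars together with the central $3$-star merge into a single star of size $2a_2+1$. Thus
\begin{align*}
A_{m,n}&=C(a_d,\ldots,a_3,2a_2+1,a_3,\ldots,a_d),\\
B_{m,n}&=C(a_d,\ldots,a_3,a_2,2,a_2,a_3,\ldots,a_d),\\
C_{m,n}&=C(a_d,\ldots,a_3,a_2,1,1,a_2,a_3,\ldots,a_d).
\end{align*}

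I would then dispatch the two easy cases. For $C_{m,n}$, applying Lemma~\ref{lemma:removeedge} to the edge joining the two central degree-two vertices (the two middle $1$'s) splits the graph into two caterpillars each equal, up to reversal, to $C(a_d,\ldots,a_2,1)$, while deleting the two endpoints leaves two copies of $C(a_2,\ldots,a_d)$; since $K(a_d,\ldots,a_2,1)=K(a_2,\ldots,a_d)+K(a_3,\ldots,a_d)=m$, this yields $Z(C_{m,n})=m^2+n^2$. For $B_{m,n}$, applying the lemma to the edge between the central size-$2$ vertex and the right $a_2$-center gives $Z(C(a_d,\ldots,a_2,2))\cdot n+n\cdot Z(C(a_3,\ldots,a_d))$ (the leaf of the central vertex becomes isolated and contributes a factor $1$); using $K(a_d,\ldots,a_2,2)=2n+(m-n)=m+n$ this is $n(m+n)+n(m-n)=2mn$.

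The one step that needs care, and the main obstacle, is $A_{m,n}$: the doubled central star $2a_2+1$ introduces an explicit dependence on $a_2$ that must cancel. Applying Lemma~\ref{lemma:removeedge} to the edge between the central vertex and the right $a_3$-center gives
\begin{align*}
Z(A_{m,n})=(m-n)\bigl[(2a_2+1)(m-n)+2K(a_4,\ldots,a_d)\bigr].
\end{align*}
To eliminate the spurious $a_2$ I would invoke the continuant recursion $n=K(a_2,\ldots,a_d)=a_2K(a_3,\ldots,a_d)+K(a_4,\ldots,a_d)=a_2(m-n)+K(a_4,\ldots,a_d)$, so that $K(a_4,\ldots,a_d)=n-a_2(m-n)$; substituting collapses the bracket to $(m-n)+2n=m+n$, whence $Z(A_{m,n})=(m-n)(m+n)=m^2-n^2$. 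Finally, since $(m,n)\in\PPP$, the resulting triple $(m^2-n^2,2mn,m^2+n^2)$ is primitive Pythagorean, which completes the argument.
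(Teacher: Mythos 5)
Your proof is correct and follows essentially the same route as the paper's: the same explicit caterpillar forms for $A_{m,n}$, $B_{m,n}$, $C_{m,n}$ (using $a_1=1$), the same edge-deletion lemma, and the same key cancellation of $a_2$ via the recursion $n=K(a_2,\ldots,a_d)=a_2(m-n)+K(a_4,\ldots,a_d)$. The only cosmetic difference is in $Z(A_{m,n})$: the paper first strips the $2a_2$ pendant leaves of the central star one edge at a time and then decomposes at a spine edge, whereas you decompose at the spine edge directly and absorb the star into the continuant $K(2a_2+1,a_3,\ldots,a_d)$.
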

\begin{proof} 
Similarly to the case where $(m,n)\in\PPP_1$,
 we have
\begin{align*}
  Z(C(a_1,\ldots,a_d)))&=m,\\
  Z(C(a_2,\ldots,a_d)))&=n. 
\end{align*}
Since $G^{(2)}_{m,n}=C(\dot a_2,a_3,\ldots,a_d)$ and $a_1=1$,
we have
\begin{align*}
 A_{m,n}
 &=G^{(2)}_{m,n}\vee C(\ddot3)\vee G^{(2)}_{m,n}\\
 &=C(a_d,a_{d-1},\ldots,a_3,2a_2+1,a_3,\ldots,a_{d-1},a_d),\\
 B_{m,n}
 &=G^{(2)}_{m,n}\vee C(\dot1,2,\dot1)\vee G^{(2)}_{m,n}\\
 &=C(a_d,a_{d-1},\ldots,a_3,a_2,2,a_2,a_3,\ldots,a_{d-1},a_d)\\
 &=C(a_d,a_{d-1},\ldots,a_3,a_2,2a_1,a_2,a_3,\ldots,a_{d-1},a_d),\\
 C_{m,n}
 &=G^{(2)}_{m,n}\vee C(\dot1,1,1,\dot1)\vee G^{(2)}_{m,n}\\
 &=C(a_d,a_{d-1},\ldots,a_3,a_2,1,1,a_2,a_3,\ldots,a_{d-1},a_d)\\
 &=C(a_d,a_{d-1},\ldots,a_3,a_2,a_1,a_1,a_2,a_3,\ldots,a_{d-1},a_d).
\end{align*}
Hence, similarly to the case where $(m,n)\in\PPP_1$,
we obtain $Z(B_{m,n})=2mn$ and $Z(C_{m,n})=m^2+n^2$.
Since 
\begin{align*}
 Z(C(a_1,\ldots,a_d))&= a_1Z(C(a_2,\ldots,a_d))+Z(C(a_3,\ldots,a_d))\\
 &= Z(C(a_2,\ldots,a_d))+Z(C(a_3,\ldots,a_d)),
\end{align*}
we have 
\begin{align*}
 Z(C(a_3,\ldots,a_d))&=Z(C(a_1,\ldots,a_d))-Z(C(a_2,\ldots,a_d))\\
 &=m-n.
\end{align*}
Applying Lemma \ref{lemma:removeedge} to 
central edges $\ed{(d-1,1)}{(d-1,j)}$ in $A_{m,n}$ 
for $j=2,3,\ldots,2a_2+1$,
we obtain
\begin{align*}
 &Z(A_{m,n}) \\
=&Z(C(a_d,a_{d-1},\ldots,a_3,2a_2+1,a_3,\ldots,a_{d-1},a_d))\\
=&Z(C(a_d,a_{d-1},\ldots,a_3,1,a_3,\ldots,a_{d-1},a_d))+2a_2Z(C(a_3,\ldots,a_{d-1},a_d))^2\\
=&Z(C(a_d,a_{d-1},\ldots,a_3,1,a_3,\ldots,a_{d-1},a_d))+2a_2(m-n)^2.
\end{align*}
Since 
\begin{align*}
&Z(C(a_d,a_{d-1},\ldots,a_3,1,a_3,\ldots,a_{d-1},a_d))\\
=&Z(C(a_d,a_{d-1},\ldots,a_3,1))\cdot Z(C(a_3,\ldots,a_{d-1},a_d))\\
&+Z(C(a_d,a_{d-1},\ldots,a_3))\cdot Z(C(a_4,\ldots,a_{d-1},a_d))\\
=&Z(C(1,a_3,\ldots,a_d))\cdot (m-n)
+(m-n)\cdot Z(C(a_4,\ldots,a_d))\\
=& 
(m-n)(Z(C(a_d,a_{d-1},\ldots,a_3,1))+ Z(C(a_4,\ldots,a_{d-1},a_d)),
\end{align*}
we obtain
\begin{align*}
 &Z(A_{m,n}) \\
=&Z(C(a_d,a_{d-1},\ldots,a_3,1,a_3,\ldots,a_{d-1},a_d))
+2a_2(m-n)^2\\
=& (m-n)(Z(C(1,a_3,\ldots,a_d))+
 Z(C(a_4,\ldots,a_{d-1},a_d))+2a_2(m-n)).
\end{align*}
Since $Z(C(1,a_3,\ldots,a_d))=Z(C(a_3,\ldots,a_d))+Z(C(a_4,\ldots,a_d))$,
we have
\begin{align*}
&Z(C(1,a_3,\ldots,a_d))+ Z(C(a_4,\ldots,a_d))+2a_2(m-n)\\
=&Z(C(a_3,\ldots,a_d))+2(Z(C(a_4,\ldots,a_d)+a_2(m-n)).
\end{align*}
Since  $Z(C(a_3,\ldots,a_d))=m-n$,
we have
$Z(C(a_4,\ldots,a_d)+a_2(m-n))=Z(C(a_2,\ldots,a_d))$.
Hence we obtain
\begin{align*}
  Z(A_{m,n})
 &=
(m-n)(Z(C(a_3,\ldots,a_d))+2Z(C(a_2,\ldots,a_d)))\\
 &=
(m-n)(Z(C(a_1,\ldots,a_d))+Z(C(a_2,\ldots,a_d)))\\
 &=(m-n)(m+n)=m^2-n^2.
\qedhere\end{align*}\end{proof}
These families for $\PPP_1$ and $\PPP_2$
have common symmetric kernels
\begin{align*}
 (C(\dot1,1,\dot1), C(\ddot4), C(\dot2,\dot2))
\end{align*}
and
\begin{align*}
(C(\ddot3), C(\dot1,2,\dot1),C(\dot1,1,1,\dot1)),
\end{align*}
respectively.
Hosoya also conjectured the uniqueness of 
such structure.
Main results gives an answer to his conjecture.
First we show our main result for $\PPP_1$.
\begin{theorem}
 \label{thm:main:1}
 Let $\Set{(A_{m,n}, B_{m,n}, C_{m,n})}_{(m,n)\in \PPP_1}$
 be a family of triples of connected graphs
 satisfying 
\begin{align*}
 Z(A_{m,n})&=m^2-n^2,\\
 Z(B_{m,n})&=2mn,\\
 Z(C_{m,n})&=m^2+n^2. 
\end{align*}
 If the family $\Set{(A_{m,n}, B_{m,n}, C_{m,n})}_{(m,n)\in \PPP_1}$
 has the common symmetric kernel $(A^{(1)},B^{(1)},C^{(1)})$,
 then 
 \begin{align*}
  (A^{(1)},B^{(1)},C^{(1)})=(C(\dot1,1,\dot1),C(\ddot4),C(\dot2,\dot2)).
 \end{align*}
\end{theorem}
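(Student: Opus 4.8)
The plan is to convert the one-point-union structure into a statement about quadratic forms and then pin everything down from the three smallest admissible cases of $\PPP_1$. First I would record a Hosoya formula for the symmetric one-point union. Fix a connected rooted graph $(G,g)$ and write $P=Z(G)$ and $Q=Z(G\setminus g)$, where $G\setminus g$ is the vertex-deletion; then $1\le Q\le P$ since $G\setminus g$ is a subgraph of $G$. A matching of $G\vee_{l}S\vee_{r}G$ restricts to a matching of each copy of $G$ and of the kernel $S$, the only constraint being that a merged base vertex is covered at most once. Splitting according to whether $S$ covers $l$ and $r$ gives, when $l\ne r$,
\begin{align*}
 Z(G\vee_{l}S\vee_{r}G)=\delta P^2+(\beta+\gamma-2\delta)PQ+(\alpha-\beta-\gamma+\delta)Q^2,
\end{align*}
where $\alpha=Z(S)$, $\beta=Z(S\setminus l)$, $\gamma=Z(S\setminus r)$, $\delta=Z(S\setminus\{l,r\})$; when $l=r=u$ the same reasoning gives $2\mu PQ+(\alpha-2\mu)Q^2$ with $\mu=Z(S\setminus u)$, and in particular there is no $P^2$ term. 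The essential point is that, applied to our family, $Z(A_{m,n})$, $Z(B_{m,n})$, $Z(C_{m,n})$ equal three quadratic forms $q_A,q_B,q_C$ evaluated at $(P_{m,n},Q_{m,n})=(Z(G_{m,n}),Z(G_{m,n}\setminus g_{m,n}))$, and these forms depend only on the fixed kernel, not on $(m,n)$.

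Next I would exploit the minimal case $(m,n)=(2,1)$, whose target triple is $(3,4,5)$. Since a connected graph has Hosoya index strictly larger than its number of edges, $A_{2,1}$ has at most two edges, so $Z(A_{2,1})=3$ forces $A_{2,1}=C(1,1,1)$; likewise $Z(B_{2,1})=4$ gives $B_{2,1}\in\{C(4),Q_3(1,1,1)\}$ and $Z(C_{2,1})=5$ gives $C_{2,1}\in\{C(2,2),C(5)\}$, by the lists in Example \ref{ex:list:bynum}. Because $G_{m,n}$ must be connected (otherwise the union is disconnected), edge-additivity $e(A_{2,1})=2e(G_{2,1})+e(A^{(1)})=2$ leaves only $e(G_{2,1})\in\{0,1\}$, so $G_{2,1}$ is a single vertex or a single edge. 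Enumerating the admissible base-point placements on each of these small kernels produces a short finite list of candidate triples $(q_A,q_B,q_C)$.

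Then I would eliminate all but one candidate using the next two admissible cases, $(4,1)\mapsto(15,8,17)$ and $(5,2)\mapsto(21,20,29)$; note that $(3,1)$ is \emph{not} admissible (equal parity), so these really are the next cases. For each candidate one solves $q_B(P,Q)=2mn$ in integers with $P\ge Q\ge 1$ and then tests $q_A,q_C$ at the resulting $(P,Q)$. The case $G_{2,1}=C(2)$ dies at once: there $A^{(1)}$ is forced to be a single vertex, so $q_A=2PQ-Q^2$, and at $(4,1)$ the solutions of $q_A=15$ are incompatible with $q_B=8$ for the forced one-edge kernel $B^{(1)}$. With $G_{2,1}$ thus a single vertex (so $A_{2,1}=A^{(1)}$, etc.), the case $(4,1)$ forces $(P,Q)=(3,1)$, discards $B^{(1)}=Q_3(1,1,1)$ and $C^{(1)}=C(5)$, and pins the base points of $A^{(1)}=C(1,1,1)$ to the two ends and of $B^{(1)}=C(4)$ to the centre, giving $A^{(1)}=C(\dot1,1,\dot1)$ and $B^{(1)}=C(\ddot4)$. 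The only surviving ambiguity, the base points of $C^{(1)}=C(2,2)$, is settled by $(5,2)$: there $q_A=21$ and $q_B=20$ both force $(P,Q)=(3,2)$, and among the placements on $C(2,2)$ only the internal–internal one yields $q_C=29$, so $C^{(1)}=C(\dot2,\dot2)$.

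The part requiring the most care is making the enumeration in the second step genuinely exhaustive and then correctly coupling the three forms in the third step. Two traps must be avoided: a priori $G_{2,1}$ may be an edge rather than a vertex, and the placements with $l=r$ give forms with no $P^2$ term, which behave differently and can spuriously match at a single case — for instance $C(2,2)$ with $l=r$ at an end also yields $q_C=17$ at $(4,1)$, and is excluded only at $(5,2)$. The structural discipline that makes the elimination work is that the \emph{same} pair $(P,Q)$ must satisfy all three equations, and that it is already determined by $q_A$ and $q_B$ before $q_C$ is tested; once this is respected the arithmetic in each case is routine, and the unique surviving triple is $(C(\dot1,1,\dot1),C(\ddot4),C(\dot2,\dot2))$.
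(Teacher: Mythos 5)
Your proposal is correct, and it reaches the theorem by a genuinely different technical device than the paper, although the overall architecture coincides: both arguments test only the pairs $(2,1)$, $(4,1)$, $(5,2)$, both split on whether $G_{2,1}$ is a single vertex or a single edge, and both discover that the dangerous candidate $C(\ddot1,1,2)$ for $C^{(1)}$ survives the $(4,1)$ test and is killed only at $(5,2)$. The difference lies in how the eliminations are carried out. The paper works with explicit graphs: it compiles the classification of all connected graphs of Hosoya index at most $13$ (Lemma \ref{lemma:table}), uses it to recognize the unions as concrete graphs (e.g.\ $G_{4,1}\vee B^{(1)}\vee G_{4,1}=C(8)$, forcing $G_{4,1}=C(\dot3)$), and finishes by direct evaluation (Lemmas \ref{lemma:B1}, \ref{lemma:C1a}, \ref{lemma:C1b}); only Lemma \ref{lemma:C1c} argues arithmetically. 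You instead express $Z(G\vee S\vee G)$ as a quadratic form in $(P,Q)=(Z(G),Z(G\setminus g))$ whose coefficients are the four matching counts of the rooted kernel; your formula is right, including the $l=r$ case with vanishing $P^2$ coefficient, and the resulting Diophantine checks go through as you claim: at $(4,1)$ the four placements on $A^{(1)}=C(3)$ give $q_A\in\{P^2+2PQ,\ P^2+PQ+Q^2,\ 4PQ-Q^2,\ 2PQ+Q^2\}$, the equation $q_B=8$ is solvable with $P\ge Q\ge1$ only for $C(\ddot4)$ at $(3,1)$ and for the rooted triangle at $(2,1)$, and only $(3,1)$ is compatible with $q_A=15$, which pins $A^{(1)}=C(\dot1,1,\dot1)$ and $B^{(1)}=C(\ddot4)$; at $(5,2)$, $q_A=21$ forces $(P,Q)=(3,2)$, where $C(\dot2,\dot2)$ gives $29$ while $C(\ddot1,1,2)$ gives $32$. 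What your route buys is independence from the large tables: you never need to identify $G_{4,1}$ or $G_{5,2}$ as graphs, only their invariant pair $(P,Q)$, and the classification of graphs by Hosoya index is needed only up to $Z=5$. What the paper's route buys is concreteness (the intermediate graphs are exhibited, and no general union formula is required); correspondingly, in your proof the union formula and the exhaustiveness of the finite placement enumeration carry all the weight, and your plan correctly flags the two places where that exhaustiveness is delicate.
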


\begin{proof}
 We find the graphs $A^{(1)}$, $B^{(1)}$, $C^{(1)}$ and $G_{m,n}$
 so that
 \begin{align*}
  Z(G_{m,n} \vee A^{(1)} \vee G_{m,n}) &= m^2-n^2,\\
  Z(G_{m,n} \vee B^{(1)} \vee G_{m,n}) &= 2mn, \\
  Z(G_{m,n} \vee C^{(1)} \vee G_{m,n}) &= m^2+n^2
 \end{align*}
 for $(m,n)\in\Set{(2,1),(4,1),(5,2)}$.
Since $Z(G_{2,1}\vee A^{(1)} \vee G_{2,1})=3$,
the graph $G_{2,1}\vee A^{(1)} \vee G_{2,1}$
is the caterpillar $C(3)$. 
By Lemma \ref{lemma:B1},
the graph $G_{2,1} \vee B^{(1)} \vee G_{2,1}$ is the caterpillar $C(4)$.
Hence the graph $G_{2,1}$ is
either $C(\dot1)$ or $C(\dot2)$.

If $G_{2,1}=C(\dot2)$, then 
the graphs $A^{(1)}$ and $B^{(1)}$ are
$C(\ddot1)$ and $C(\ddot2)$, respectively.
Since $Z(G_{4,1}\vee B^{(1)} \vee G_{4,1})=8$,
it follow from the table in Lemma \ref{lemma:table} that
the graph $G_{4,1}$ is the caterpillar $C(\dot4)$.
Hence we obtain $Z(G_{4,1}\vee A^{(1)} \vee G_{4,1})=Z(7)=7$,
which contradicts $Z(G_{4,1}\vee A^{(1)} \vee G_{4,1})=15$.

If $G_{2,1}=C(\dot1)$, 
then 
the graphs $A^{(1)}$ and $B^{(1)}$ are
$C(3)$ and $C(4)$,
respectively.
Since $Z(G_{4,1}\vee B^{(1)} \vee G_{4,1})=8$,
the graph $G_{4,1}\vee B^{(1)} \vee G_{4,1}$ is the caterpillar $C(8)$.
Hence 
the graphs $B^{(1)}$ and $G_{4,1}$
are
$C(\ddot4)$ and $C(\dot3)$,
respectively.
It follows from direct calculation that
\begin{align*}
Z(G_{4,1}\vee A^{(1)\vee G_{4,1}})
=\begin{cases}
  Z(C(3,4))=13 &(A^{(1)}=C(\dot1,\dot2)) \\
  Z(C(5,2))=14&(A^{(1)}=C(\ddot1,2)) \\
  Z(C(7))=7&(A^{(1)}=C(\ddot3)).
 \end{cases}
\end{align*}
Hence the graph  $A^{(1)}$ is $C(\dot1,1,\dot1)$.
Since $Z(C^{(1)})=Z(G_{2,1}\vee C^{(1)} \vee G_{2,1})=5$,
the graph $C^{(1)}$ is either $C(2,2)$ or $C(5)$.
By Lemmas \ref{lemma:C1a} and  \ref{lemma:C1b},
candidates of $C^{(1)}$ are 
$C(\dot2,\dot2)$ and $C(\ddot1,1,2)$.
By Lemma \ref{lemma:C1c},
if $C^{(1)}=C(\ddot1,1,2)$, then
we can not construct $G_{5,2}$.
Hence the graph $C^{(1)}$ is $C(\dot2,\dot2)$.
Therefore
$(A^{(1)},B^{(1)},C^{(1)})=(C(\dot1,1,\dot1),C(\ddot4),C(\dot2,\dot2))$.
\end{proof}

Next we show our main result for $\PPP_2$.
\begin{theorem}
 \label{thm:main:2}
 Let $\Set{(A_{m,n}, B_{m,n}, C_{m,n})}_{(m,n)\in \PPP_2}$
 be a family of triples of connected graphs
 satisfying 
\begin{align*}
 Z(A_{m,n})&=m^2-n^2, \\
 Z(B_{m,n})&=2mn, \\
 Z(C_{m,n})&=m^2+n^2.
\end{align*}
 If the family $\Set{(A_{m,n}, B_{m,n}, C_{m,n})}_{(m,n)\in \PPP_2}$
 has the common symmetric kernel $(A^{(2)},B^{(2)},C^{(2)})$,
 then 
 \begin{align*}
  (A^{(2)},B^{(2)},C^{(2)})=(C(\ddot3),C(\dot1,3,\dot1),C(\dot1,1,1,\dot1)).
 \end{align*}
\end{theorem}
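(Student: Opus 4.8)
The plan is to mirror the proof of Theorem \ref{thm:main:1}: I would restrict the hypothesis to the smallest few pairs in $\PPP_2$, then use the common-kernel structure $A_{m,n}=G_{m,n}\vee A^{(2)}\vee G_{m,n}$ (and likewise for $B^{(2)}$ and $C^{(2)}$) together with the prescribed Hosoya indices $m^2-n^2$, $2mn$, $m^2+n^2$ to enumerate the possible shapes of $G_{m,n}$ and of the three kernels, and finally eliminate every candidate except the one asserted in the statement. The three smallest pairs in $\PPP_2$ are $(3,2)$, $(4,3)$ and $(5,4)$, giving the primitive triples $(5,12,13)$, $(7,24,25)$ and $(9,40,41)$, and I would work primarily with these.

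First I would exploit the smallest pair $(m,n)=(3,2)$, where $Z(A_{3,2})=m^2-n^2=5$. By Example \ref{ex:list:bynum} the only connected graphs with Hosoya index $5$ are $C(2,2)$ and $C(5)$, so $A_{3,2}=G_{3,2}\vee A^{(2)}\vee G_{3,2}$ must be one of these two. Since the one-point union glues a copy of $G_{3,2}$ symmetrically on each side of $A^{(2)}$, I would enumerate all symmetric decompositions $G\vee A^{(2)}\vee G$ of $C(2,2)$ and of $C(5)$; this is a finite check that confines $G_{3,2}$ to a short list (such as $C(\dot1)$ and $C(\dot2)$) and pins $A^{(2)}$ accordingly, exactly as the $Z=3$ computation forced $G_{2,1}$ into $C(\dot1)$ or $C(\dot2)$ in Theorem \ref{thm:main:1}. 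The expected surviving configuration is $A_{3,2}=C(5)$ with $G_{3,2}=C(\dot2)$ and $A^{(2)}=C(\ddot3)$.

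Next I would propagate each surviving branch to the other two indices and to the remaining pairs. For a fixed candidate $G_{3,2}$ the equations $Z(G_{3,2}\vee B^{(2)}\vee G_{3,2})=12$ and $Z(G_{3,2}\vee C^{(2)}\vee G_{3,2})=13$ constrain $B^{(2)}$ and $C^{(2)}$; here I would use Lemma \ref{lemma:removeedge} to reduce the Hosoya index of each one-point union to indices of smaller caterpillars, a tabulation of small indices as in Lemma \ref{lemma:table}, and Lemma \ref{lemma:cat:induction} to upgrade an equality of Hosoya indices to an isomorphism of caterpillars. The branches that pass $(3,2)$ must then be tested against $(4,3)$ and $(5,4)$: a genuinely common kernel must admit graphs $G_{4,3}$ and $G_{5,4}$ realizing $(7,24,25)$ and $(9,40,41)$, and I expect the spurious candidates (for instance $A_{3,2}=C(2,2)$, or a wrong choice of $G_{3,2}$, or an alternative $C^{(2)}$) to fail precisely this realizability test, in the same way the $G_{2,1}=C(\dot2)$ branch was killed by the $(4,1)$ computation in Theorem \ref{thm:main:1}. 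The $\PPP_2$ analogues in Section \ref{sec:techlemma} of the technical lemmas invoked for Theorem \ref{thm:main:1} are what make these eliminations rigorous.

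The main obstacle is the case analysis itself. Unlike the $\PPP_1$ situation, the smallest available index is $Z(A_{3,2})=5$ rather than $3$, so the very first step already branches into two graphs ($C(2,2)$ and $C(5)$) and several based decompositions rather than a single forced one, and correspondingly more candidates must be carried forward and then killed by the later pairs. Two recurring technical points need care throughout: the non-uniqueness of caterpillar codes coming from the identity $C(\ldots,a_d,1)=C(\ldots,a_d+1)$, so that an equality of Hosoya indices does not immediately yield equality of codes (this is exactly what Lemma \ref{lemma:cat:induction} absorbs); and the bookkeeping of base points under $\vee$, since one abstract graph can arise from several based decompositions. Once the admissible triples are narrowed to a single one, it coincides with the kernel asserted in the theorem, which completes the proof.
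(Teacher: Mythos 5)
Your high-level plan is the same as the paper's (restrict to a few small pairs, enumerate decompositions via tables of small Hosoya indices, then eliminate), but your concrete choice of test pairs is fatal: the constraints coming from $(3,2)$, $(4,3)$, $(5,4)$ do \emph{not} determine the kernel, so the elimination you are counting on cannot succeed. All three of your pairs have the form $(k+1,k)$, i.e.\ $m/n=[1,k]$, and for every such pair the triple of caterpillars admits a second symmetric decomposition. Explicitly, take
\begin{align*}
 (A^{(2)},B^{(2)},C^{(2)})=(C(\ddot5),\,C(\dot2,2,\dot2),\,C(\dot2,1,1,\dot2)),
 \qquad G_{k+1,k}=C(\dot{k-1})
\end{align*}
(the star with $k-2$ leaves glued at its center; for $k=2,3,4$ this is $C(\dot1)$, $C(\dot2)$, $C(\dot3)$). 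The resulting one-point unions are $C(2k+1)$, $C(k,2,k)$, $C(k,1,1,k)$, and
\begin{align*}
 Z(C(2k+1))&=2k+1=m^2-n^2,\\
 Z(C(k,2,k))&=k(2k+1)+k=2mn,\\
 Z(C(k,1,1,k))&=k(2k+1)+k+1=m^2+n^2,
\end{align*}
so this kernel reproduces $(5,12,13)$, $(7,24,25)$ and $(9,40,41)$ exactly as the asserted kernel does. The reason is that when $m/n=[1,k]$ the graph $G^{(2)}_{m,n}$ of the paper's construction is a star glued at its center, so one pendant edge of each copy can be migrated into the kernel; no pair of consecutive integers can detect the difference. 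Consequently, after your three tests at least two kernels survive, and your expectation that the spurious branches ``fail precisely this realizability test'' is false for $(5,4)$; likewise your claim that $(3,2)$ already forces $G_{3,2}=C(\dot2)$ and $A^{(2)}=C(\ddot3)$ is premature, since $G_{3,2}=C(\dot1)$ with $A^{(2)}=C(\ddot5)$ survives as well.

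This is exactly why the paper's proof takes $(7,4)$, whose expansion $7/4=[1,1,3]$ has length three, as its third pair: there the required $G_{7,4}$ is $C(\dot1,3)$, which is not a star glued at its center, and Lemma~\ref{lemma:caseA} shows that the branch $A^{(2)}=C(\ddot5)$ admits no $G_{7,4}$ with $Z(G_{7,4}\vee B^{(2)}\vee G_{7,4})=56$ at all; Lemmas~\ref{lemma:caseC}, \ref{lemma:caseB}, \ref{lemma:G74} and \ref{lemma:C2} then dispose of the remaining branches and pin down $B^{(2)}$, $G_{7,4}$ and $C^{(2)}$. To repair your argument you must replace $(5,4)$ by a pair whose expansion $[1,a_2,a_3,\ldots]$ has at least three terms, and then actually carry out the finite case analysis: as written, your proposal leaves every elimination as an expectation, and those verifications are the entire content of the theorem. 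One further caution: the kernel printed in the theorem statement, $C(\dot1,3,\dot1)$, is a typo in the paper; what the eliminations actually produce, and what matches the paper's construction for $\PPP_2$, is $B^{(2)}=C(\dot1,2,\dot1)$ (indeed $Z(C(\dot2)\vee C(\dot1,3,\dot1)\vee C(\dot2))=Z(C(2,3,2))=16\neq 12$), so ``coinciding with the kernel asserted in the theorem'' is not literally the right end point of the argument.
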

\begin{proof}
 We find the graphs $A^{(2)}$, $B^{(2)}$, $C^{(2)}$ and $G_{m,n}$
 so that
 \begin{align*}
  Z(G_{m,n} \vee A^{(2)} \vee G_{m,n}) &= m^2-n^2,\\
  Z(G_{m,n} \vee B^{(2)} \vee G_{m,n}) &= 2mn, \\
  Z(G_{m,n} \vee C^{(2)} \vee G_{m,n}) &= m^2+n^2
 \end{align*}
 for $(m,n)\in\Set{(3,2),(4,3),(7,4)}$.
 It follows from Lemma \ref{lemma:A2}
 that the graph $G_{4,3}\vee A^{(2)}\vee G_{4,3}$ is the caterpillar $C(7)$, 
 which implies
 \begin{align*}
  (A^{(2)}, G_{3,2}, G_{4,3}) \in 
   \Set{
  \begin{array}{c}
  (C(\ddot5),C(\dot1),C(\dot2)),\\
   (C(\ddot3),C(\dot2),C(\dot3)),\\
   (C(\ddot1),C(\dot3),C(\dot4))   
  \end{array}
  }.
 \end{align*}
 If $(A^{(2)}, G_{3,2}, G_{4,3})=(C(\ddot5),C(\dot1),C(\dot2))$,
 then we have 
 \begin{align*}
  Z(B^{(2)})=Z(G_{3,2} \vee B^{(2)} \vee G_{3,2})=12. 
 \end{align*}
 By Lemma \ref{lemma:caseA},
 we have
 $Z(G_{7,4}\vee B^{(2)} \vee G_{7,4})\neq 56$
 for any $(G_{7,4},B^{(2)})$ such that
 \begin{align*}
  Z(G_{7,4}\vee A^{(2)} \vee G_{7,4})&=33\\
  Z(G_{4,3}\vee B^{(2)}\vee G_{4,3})&= 24.  
 \end{align*}
 Hence $(A^{(2)}, G_{3,2}, G_{4,3})=(C(5),C(1),C(2))$ does not
 satisfy the condition.
 By Lemma \ref{lemma:caseC},
 for $(A^{(2)}, G_{3,2}, G_{4,3})=(C(\ddot1),C(\dot3),C(\dot4))$,
 we have  $Z(G_{4,3}\vee B^{(2)}\vee G_{4,3})\neq 24$.
 Hence $(A^{(2)}, G_{3,2}, G_{4,3})=(C(\ddot1),C(\dot3),C(\dot4))$
 does not satisfy the condition.
 By Lemma \ref{lemma:caseB},
 if
 $(A^{(2)}, G_{3,2}, G_{4,3})=(C(\ddot3),C(\dot2),C(\dot3))$
 and
 $B^{(2)}\neq C(\dot1,2,\dot1)$,
 then  
 \begin{align*}
  Z(G_{4,3}\vee B^{(2)}\vee G_{4,3})\neq 24.
 \end{align*}
 Hence we may consider only the case 
 where  
 \begin{align*}
  (A^{(2)},B^{(2)} ,G_{3,2}, G_{4,3})=(C(\ddot3),C(\dot1,2,\dot1),C(\dot2),C(\dot3)).  
 \end{align*}
 In this case,
 it follows from Lemma \ref{lemma:G74} that $G_{7,4}=C(\dot1,3)$.
 We also obtain $C^{(2)}=C(\dot1,1,1,\dot1)$ by Lemma \ref{lemma:C2}.
 Therefore we have
 \begin{align*}
 (A^{(2)},B^{(2)},C^{(2)})&=(C(\ddot3),C(\dot1,2,\dot1),C(\dot1,1,1,\dot1)).
 \qedhere\end{align*}\end{proof}

Theorems \ref{thm:main:1} and \ref{thm:main:2} 
imply
the uniqueness of the symmetric kernels
 $(A^{(1)},B^{(1)},C^{(1)})$ and
 $(A^{(2)},B^{(2)},C^{(2)})$.
However $G_{m,n}$ are not unique.
\begin{example}
Let 
$G=C(\dot2,3)$ and
$G'=Q_4(\dot1,1,1,1)$.
In this case, the Hosoya indices of $G$ and $G'$
are equal to each other.
Moreover the Hosoya indices of 
the restrictions $G$ and $G'$ to vertices other than the base points
are also equal to each other.
Hence we have $Z(A)=Z(A')$, $Z(B)=Z(B')$ and $Z(C)=Z(C')$,
where 
\begin{align*}
A&=G\vee C(\ddot3)\vee G,&
A'&=G'\vee C(\ddot3)\vee G',\\
B&=G\vee C(\dot1,2,\dot1)\vee G,&
B'&=G'\vee C(\dot1,2,\dot1)\vee G', \\
C&=G\vee C(\dot1,1,1,\dot1)\vee G,&
C'&=G'\vee C(\dot1,1,1,\dot1)\vee G'. 
\end{align*}
Since $[1,2,3]$ is equal to $10/7$,
the both triples $(Z(A),Z(B),Z(C))$ and $(Z(A'),Z(B'),Z(C'))$
are equal to the primitive Pythagorean triple for $(10,7)\in\PPP_2$. 
\end{example}
If we consider only caterpillars,
then 
we have the uniqueness of $G_{m,n}$.
First we show the theorem for $(m,n)\in \PPP_1$.
\begin{theorem}
Define the caterpillars $G$, $A$, $B$ and $C$ by
 \begin{align*}
G&=C(\dot\alpha_1,\alpha_2,\ldots,\alpha_\delta),\\
A&=G\vee C(\dot1,1,\dot1)\vee G,\\
B&=G\vee C(\ddot4)\vee G, \\
C&=G\vee C(\dot2,\dot2)\vee G).  
 \end{align*}
 If 
 the triple $(Z(A),Z(B),Z(C))$ 
 is the primitive Pythagorean triple $(m^2-n^2,2mn,m^2+n^2)$
 for $(m,n)\in \PPP_1$,
 then we have
\begin{align*}
 \frac{m}{n}=[\alpha_1+1,\alpha_2\ldots,\alpha_d].
\end{align*}
\end{theorem}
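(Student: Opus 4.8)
The plan is to recognize that the construction in the statement is exactly the construction of Proposition \ref{prop:exist:1} read in reverse, and then to read off the answer by an elementary comparison of Pythagorean parameters.

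First I would put $a_1=\alpha_1+1$ and $a_i=\alpha_i$ for $2\le i\le\delta$. Then $G=C(\dot\alpha_1,\alpha_2,\ldots,\alpha_\delta)=C(\dot{a_1-1},a_2,\ldots,a_\delta)$ is precisely the kernel $G^{(1)}$ appearing in Proposition \ref{prop:exist:1}, and $A$, $B$, $C$ coincide with the graphs built there, where I set $M=K(a_1,\ldots,a_\delta)=K(\alpha_1+1,\alpha_2,\ldots,\alpha_\delta)$ and $N=K(a_2,\ldots,a_\delta)=K(\alpha_2,\ldots,\alpha_\delta)$. Note that $a_1=\alpha_1+1\ge 2$, so this is genuinely the $\PPP_1$ situation.

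The key observation is that the computations carried out in the proof of Proposition \ref{prop:exist:1} use only the relation $Z(C(\cdot))=K(\cdot)$, the recurrence for continuants, and Lemma \ref{lemma:removeedge}; they never use that the two continuant values are relatively prime. Hence the very same manipulations give, unconditionally,
\begin{align*}
Z(A)=M^2-N^2,\qquad Z(B)=2MN,\qquad Z(C)=M^2+N^2.
\end{align*}
I would either invoke the proof of Proposition \ref{prop:exist:1} verbatim with $(m,n)$ replaced by $(M,N)$, or reproduce the three short continuant computations, the one for $A$ resting on the identity $K(\alpha_1,\alpha_2,\ldots,\alpha_\delta)=K(a_1-1,a_2,\ldots,a_\delta)=M-N$.

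Finally, the hypothesis gives $(Z(A),Z(B),Z(C))=(m^2-n^2,2mn,m^2+n^2)$. Comparing the first and last coordinates with the formulas above yields $m^2-n^2=M^2-N^2$ and $m^2+n^2=M^2+N^2$; adding and subtracting these gives $m^2=M^2$ and $n^2=N^2$, hence $m=M$ and $n=N$ since all four integers are positive. Therefore
\begin{align*}
\frac{m}{n}=\frac{M}{N}=\frac{K(\alpha_1+1,\alpha_2,\ldots,\alpha_\delta)}{K(\alpha_2,\ldots,\alpha_\delta)}=[\alpha_1+1,\alpha_2,\ldots,\alpha_\delta].
\end{align*}
The only point requiring care is the claim that Proposition \ref{prop:exist:1}'s computation is independent of the coprimality assumption on its parameters; once that is granted there is no serious obstacle, since the final deduction $m^2=M^2$, $n^2=N^2$ is immediate and needs neither the middle coordinate nor any uniqueness property of the continued fraction expansion.
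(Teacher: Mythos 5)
Your proof is correct, and while it shares its first half with the paper's argument, the endgame is genuinely different and more elementary. Both proofs begin the same way: set $M=K(\alpha_1+1,\alpha_2,\ldots,\alpha_\delta)$, $N=K(\alpha_2,\ldots,\alpha_\delta)$ and rerun the computations of Proposition \ref{prop:exist:1} to get $Z(A)=M^2-N^2$, $Z(B)=2MN$, $Z(C)=M^2+N^2$; your observation that these computations use only the continuant recurrence and Lemma \ref{lemma:removeedge}, never coprimality, is exactly the point that makes this legitimate (one cannot cite Proposition \ref{prop:exist:1} as a black box, since $(M,N)\in\PPP_1$ — in particular opposite parity — is not known at this stage, and you correctly flag this). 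Where you diverge is in identifying $(M,N)$ with $(m,n)$: the paper first argues that $M$ and $N$ are coprime and of opposite parity (a parity argument using primitivity of the triple), so that $(M,N)\in\PPP$, and then invokes the bijectivity of $\varphi$ on $\PPP$, followed by Lemma \ref{lemma:cat:induction}; you instead just add and subtract the equations $M^2-N^2=m^2-n^2$ and $M^2+N^2=m^2+n^2$ to get $M=m$, $N=n$ directly. In effect you inline the injectivity of $\varphi$ in a form valid for \emph{all} positive integers, which lets you skip the parity and coprimality checks, the bijection $\varphi$, and Lemma \ref{lemma:cat:induction} entirely (the paper's use of that lemma is in any case stronger than needed, since the conclusion only requires the continued-fraction identity $M/N=[\alpha_1+1,\alpha_2,\ldots,\alpha_\delta]$). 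Your route is shorter and sidesteps the spot where the paper's write-up is shakiest (its definition $\mu=Z(C(\alpha_1,\ldots,\alpha_\delta))$ appears to be a typo for $Z(C(\alpha_1+1,\alpha_2,\ldots,\alpha_\delta))$); the paper's route, on the other hand, yields the extra structural fact that the kernel caterpillar itself is the one from Proposition \ref{prop:exist:1}, not merely that the continued fractions agree.
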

\begin{proof}
 Let $\mu=Z(C(\alpha_1,\alpha_2,\ldots,\alpha_\delta))$,
 $\nu=Z(C(\alpha_2,\ldots,\alpha_\delta))$.
 Similarly to the proof of Proposition \ref{prop:exist:1},
 we obtain $Z(A)=\mu^2-\nu^2$, $Z(B)=2\mu\nu$ and $Z(C)=\mu^2+\nu^2$.
 If both $\mu$ and $\nu$ are odd or both are even,
 then it follows that $Z(A)$ and $Z(C)$ are even,
 which contradicts the assumption that 
 the triple $(Z(A),Z(B),Z(C))$ 
 is the primitive Pythagorean triple.
 Moreover,
 since $\mu=K(\alpha_1,\ldots,\alpha_\delta)$ and  
 $\nu=K(\alpha_2,\ldots,\alpha_d)$,
 the positive integers $\mu$ and $\nu$ are relatively prime and satisfy $\mu>\nu$.
 Hence $(\mu,\nu)\in \PPP$.
 It follows from 
 the bijectivity of $\varphi$ that
 $\mu=m$ and that $\nu=n$.
 Let $C(a_1-1,a_2,\ldots,a_d)$ be the caterpillar
 $G^{(1)}_{m,n}$ in Proposition \ref{prop:exist:1}.
 Both caterpillars $C(a_1-1,a_2,\ldots,a_d)$ and 
 $C(\alpha_1,\ldots,\alpha_\delta)$
 satisfy  the equations
 \begin{align*}
  Z(C(a_1-1,a_2,\ldots,a_d))&=C(\alpha_1,\ldots,\alpha_\delta)=m,\\  
  Z(C(a_2,\ldots,a_d))&=C(\alpha_2,\ldots,\alpha_\delta)=n.
 \end{align*}
 Therefore it follows from Lemma \ref{lemma:cat:induction} that
 \begin{align*}
  \frac{m}{n}&=[a_1,a_2,\ldots,a_d]=[\alpha_1+1,\alpha_2,\ldots,\alpha_\delta].
\qedhere\end{align*}\end{proof}
Similarly we can show the theorem for $(m,n)\in \PPP_2$.
\begin{theorem}
Define the caterpillars $G$, $A$, $B$ and $C$ by
\begin{align*}
 G&=C(\dot \alpha_1,\alpha_2,\ldots,\alpha_\delta), \\
 A&=G\vee C(\ddot3)\vee G,\\
 B&=G\vee C(\dot1,2,\dot1)\vee G, \\
 C&=G\vee C(\dot1,1,1,\dot1)\vee G.
\end{align*}
 If 
 the triple $(Z(A),Z(B),Z(C))$ 
 is the primitive Pythagorean triple $(m^2-n^2,2mn,m^2+n^2)$
 for $(m,n)\in \PPP_2$,
 then 
 \begin{align*}
  \frac{m}{n}=[1,\alpha_1,\ldots,\alpha_\delta].
 \end{align*}
\end{theorem}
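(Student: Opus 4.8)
The plan is to mirror the proof of the preceding theorem (the $\PPP_1$ case), replacing its continuant bookkeeping by the one adapted to $\PPP_2$. First I would introduce
\[
 \mu=Z(C(1,\alpha_1,\ldots,\alpha_\delta))=K(1,\alpha_1,\ldots,\alpha_\delta),
 \qquad
 \nu=Z(C(\alpha_1,\ldots,\alpha_\delta))=K(\alpha_1,\ldots,\alpha_\delta).
\]
By the continuant recurrence $K(a_1,a_2,\ldots,a_d)=a_1K(a_2,\ldots,a_d)+K(a_3,\ldots,a_d)$ applied with $a_1=1$, we have $\mu=K(\alpha_1,\ldots,\alpha_\delta)+K(\alpha_2,\ldots,\alpha_\delta)=\nu+K(\alpha_2,\ldots,\alpha_\delta)$, so $\mu>\nu$ and $\mu/\nu=[1,\alpha_1,\ldots,\alpha_\delta]$; moreover $\mu$ and $\nu$ are relatively prime, since $K(a_1,\ldots,a_d)$ and $K(a_2,\ldots,a_d)$ always are.

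The crucial observation is that $A$, $B$, $C$ are precisely the graphs produced by the $\PPP_2$-analogue of Proposition \ref{prop:exist:1} applied to the sequence $(a_1,\ldots,a_d)=(1,\alpha_1,\ldots,\alpha_\delta)$, for which $G^{(2)}_{\mu,\nu}=C(\dot\alpha_1,\alpha_2,\ldots,\alpha_\delta)=G$ and the hypothesis $a_1=1$ holds automatically. The Hosoya-index computations carried out there are algebraic identities among continuants and never use that the resulting pair lies in $\PPP_2$; hence I may reuse them verbatim to obtain
\[
 Z(A)=\mu^2-\nu^2,\qquad Z(B)=2\mu\nu,\qquad Z(C)=\mu^2+\nu^2.
\]

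It then remains to identify $(\mu,\nu)$ with $(m,n)$. Since $(Z(A),Z(B),Z(C))$ is a primitive Pythagorean triple, $Z(A)=\mu^2-\nu^2$ and $Z(C)=\mu^2+\nu^2$ are both odd; if $\mu$ and $\nu$ shared the same parity these would be even, so $\mu$ and $\nu$ must have opposite parity. Together with $\gcd(\mu,\nu)=1$ and $\mu>\nu$ this shows $(\mu,\nu)\in\PPP$, and therefore $\varphi(\mu,\nu)=(\mu^2-\nu^2,2\mu\nu,\mu^2+\nu^2)=(Z(A),Z(B),Z(C))=\varphi(m,n)$. Because $\varphi$ is a bijection, $(\mu,\nu)=(m,n)$, whence $m/n=\mu/\nu=[1,\alpha_1,\ldots,\alpha_\delta]$.

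The one step demanding care is the reduction in the second paragraph: I must check that the substitution $(a_1,\ldots,a_d)=(1,\alpha_1,\ldots,\alpha_\delta)$ really does reproduce the three kernels $C(\ddot3)$, $C(\dot1,2,\dot1)$, $C(\dot1,1,1,\dot1)$ attached to the common graph $G$, and that the index identities of the $\PPP_2$-proposition are genuinely formal in the $\alpha_i$, so that they apply even before one knows that $(\mu,\nu)$ lies in $\PPP_2$. Once this is confirmed, the parity and coprimality facts, together with the injectivity of $\varphi$, finish the argument immediately.
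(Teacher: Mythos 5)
Your proposal is correct and is essentially the proof the paper intends: the paper proves this theorem "similarly" to its $\PPP_1$ counterpart, and you mirror that argument exactly — define $\mu,\nu$ as the continuants attached to $G$, reuse the $\PPP_2$ proposition's formal Hosoya-index identities to get $(Z(A),Z(B),Z(C))=(\mu^2-\nu^2,2\mu\nu,\mu^2+\nu^2)$, then use parity, coprimality, $\mu>\nu$ and the bijectivity of $\varphi$ to conclude $(\mu,\nu)=(m,n)$. The only cosmetic difference is that you finish with the direct continuant identity $\mu/\nu=[1,\alpha_1,\ldots,\alpha_\delta]$ instead of invoking Lemma \ref{lemma:cat:induction} as the paper's $\PPP_1$ proof does, which is an equivalent (and if anything cleaner) last step.
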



\section{Technical lemmas}
\label{sec:techlemma}
In this section, we show lemmas 
on the Hosoya indices,
which is used in the proof of main results.

\subsection{Tables of Hosoya indices}
\label{subsec:tableofindex}
Here we calculate Hosoya indices for small graphs.
Hosoya indices for all graphs with at most five edges
are in Example \ref{ex:list:bynum}.
The following are the complete list of caterpillars $G$
with six edges such that $Z(G)\leq 13$:
 \begin{align*}
  Z(C(7))&=7,&
  Z(C(2,5))&=11,&
  Z(C(3,4))&=13.
 \end{align*}
The following are the complete list of the other connected graphs $G$
with six edges such that $Z(G)\leq 13$:
 \begin{align*}
  Z(Q_3(1,1,4))&= 10,&
  Z(Q_3(1,2,3))&= 12,&\\
  Z(\Theta_{3}(1,1,2,1))&=11,&
  Z(\Theta_{3}(1,2,1,1))&=12,&\\
  Z(\Theta_{3}(1,1,1,1,1))&=13,&
  Z(Q_4(1,1,1,3))&=13,&  \\
  Z(K_{4})&=10,&
  Z(K_{2,3})&=13,\\
  Z(Q_3(1,1,\dot 1)\vee Q_3(1,1,\dot 1))&= 12,&
 \end{align*}
 where $K_4$ stands for the complete graph and
 $K_{2,3}$ stands for the complete bipartite graph.
The following are the complete list of the connected graphs $G$
with seven edges such that $Z(G)\leq 13$:
 \begin{align*}
  Z(C(8))&=8,&
  Z(C(2,6))&= 13,&
  Z(Q_3(1,1,5))&= 12.
 \end{align*}
The Hosoya index of a graph with at least eight edges except $C(k)$
is greater than $13$.
Hence we have the following.
\begin{lemma}
\label{lemma:table}
Let $\ZZZ_n$ be the set of connected graphs  $G$ such that $Z(G)=n$.
We have
\begin{align*}
 \ZZZ_1 &=\Set{C(1)},\\
 \ZZZ_2 &=\Set{C(2)},\\
 \ZZZ_3 &=\Set{C(3)},\\
 \ZZZ_4&=\Set{C(4),\ Q_3(1,1,1)},\\
 \ZZZ_5&=\Set{C(2,2),\ C(5)},\\
 \ZZZ_6&=\Set{C(6),\ Q_3(1,1,2)},\\
 \ZZZ_7&=\Set{C(7),\ C(2,2,1),\ Q_4(1,1,1,1)},\\
 \ZZZ_8&=\Set{C(8),\ C(2,1,2),\ Q_3(1,1,3),\ \Theta_{3}(1,1,1,1)},\\
 \ZZZ_9&=\Set{C(9),\ C(2,4),\ Q_3(1,2,2)},\\
 \ZZZ_{10}&=\Set{
\begin{array}{c}
C(10),\ C(3,3),\ Q_3(1,1,1;2),\\
 Q_3(1,1,4),\ Q_4(1,1,1,2),\ K_{4}
\end{array}
},\\
 \ZZZ_{11}&=\Set{
\begin{array}{c}
C(11),\ C(2,1,3),\ C(2,5),\\
 Q_5(1,1,1,1,1),\ \Theta_{3}(1,1,2,1)
\end{array}
},\\
 \ZZZ_{12}&=\Set{
\begin{array}{c}
C(12),\ C(2,2,2),\ Q_3(1,2,3),\ Q_3(1,1,5),\\
 Q_3(1,1,\dot1)\vee Q_3(\dot1,1,1),\ \Theta_{3}(1,2,1,1)
\end{array}
},\\
 \ZZZ_{13}&=\Set{
\begin{array}{c}
C(13) ,\ C(2,6) ,\ C(3,4) ,\ C(2,1,1,2) ,\\
 \Theta_{3}(1,1,1,1,1) ,
\ Q_4(1,1,1,3) ,\ K_{2,3} 
\end{array}
}.
\end{align*} 
\end{lemma}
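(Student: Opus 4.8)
The plan is to turn the classification into a finite search organized by the number of edges. The starting point is the bound recorded in the Remark: for a connected graph $G$ with $e$ edges one has $Z(G)\ge e+1$, since the empty matching and the $e$ single-edge matchings are already distinct. Hence $Z(G)\le 13$ forces $e\le 12$, so every graph contributing to any $\ZZZ_n$ with $n\le 13$ has at most twelve edges. Two further tools drive the search: the product formula for disjoint unions together with the deletion recursion of Lemma~\ref{lemma:removeedge}, which compute $Z$ of any candidate recursively, and the subgraph monotonicity $Z(H)\le Z(G)$ for $H\subseteq G$ from the Remark, which prunes the search, since once a connected subgraph already has index exceeding $13$ no supergraph can qualify.

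First I would settle the range $e\le 7$. The cases $e\le 5$ are exactly Example~\ref{ex:list:bynum}, which already lists every connected graph with at most five edges and its index. For $e=6$ and $e=7$ I would grow connected graphs edge by edge, evaluating each index by Lemma~\ref{lemma:removeedge} and the product formula and discarding any graph whose index exceeds $13$; monotonicity keeps this finite because every admissible graph is built from an admissible graph on one fewer edge. This reproduces precisely the six-edge and seven-edge lists displayed just before the statement, all of index at most $13$.

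The heart of the matter is the claim that for $e\ge 8$ the only connected graphs with $Z\le 13$ are the stars $C(k)$, for which $Z(C(k))=k$. I would first treat trees. A tree that is not a star has at least two non-leaf vertices, and since these induce a subtree there is an edge $uw$ both of whose endpoints are internal. Deleting $uw$ splits the tree into subtrees $T_u\ni u$ and $T_w\ni w$ with $p\ge 1$ and $q\ge 1$ edges, and the product formula, Lemma~\ref{lemma:removeedge}, and the edge bound give
\[
Z(T)=Z(T_u)Z(T_w)+Z(T_{u,w})\ge (p+1)(q+1)+1\ge 2(p+q)+1=2e-1,
\]
using $(p-1)(q-1)\ge 0$ and $p+q=e-1$; thus a non-star tree with $e\ge 8$ edges has $Z\ge 15>13$. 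For a connected graph $G$ carrying a cycle I would aim for the uniform bound $Z(G)\ge 2e-2$ for connected non-stars, which already yields $Z\ge 14$ when $e\ge 8$: passing to a spanning tree $T$, if $G$ has at least nine vertices then $T$ has at least eight edges, so either $T$ is a non-star tree and $Z(G)\ge Z(T)\ge 2(v-1)-1\ge 15$, or $T$ is a star and $G$ strictly contains a star together with a chord joining two leaves, whose index equals twice its number of leaves and so is at least $16$. The uniform bound $2e-2$ is meant to be obtained by running the same deletion recursion on a cycle edge; the delicate point, which I expect to be the main obstacle, is closing this induction at the boundary configurations where a single deletion fails to leave a non-star graph with enough edges (including the finitely many dense graphs on at most eight vertices), where one falls back on the star-plus-chord count and a short direct check.

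Finally I would assemble the answer. By the edge bound every connected graph with $Z(G)=n\le 13$ occurs in one of the lists above: those with at most seven edges from Example~\ref{ex:list:bynum} and the six- and seven-edge lists, and those with at least eight edges only as stars $C(n)$ with $n\ge 9$. Sorting all of these by their already-computed indices produces exactly the sets $\ZZZ_1,\ldots,\ZZZ_{13}$ as stated.
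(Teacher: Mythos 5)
Your overall architecture coincides with the paper's: a finite search organized by edge count, driven by the bound $Z(G)\geq e+1$, the deletion recursion of Lemma~\ref{lemma:removeedge}, the product rule for disjoint unions, and subgraph monotonicity, with Example~\ref{ex:list:bynum} and the six- and seven-edge tables covering $e\leq 7$, and the claim that for $e\geq 8$ only the stars $C(k)$ have $Z\leq 13$. The paper itself states that last claim as a bare assertion immediately after its seven-edge table, so your attempt to actually prove it goes beyond the text; and your tree argument is correct and complete: a non-star tree has an edge with both endpoints internal, and $Z(T)\geq (p+1)(q+1)+1\geq 2(p+q)+1=2e-1$ follows exactly as you write, disposing of all non-star trees with $e\geq 8$.

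The genuine gap is the cyclic case. The uniform bound $Z\geq 2e-2$ for connected non-stars is only announced (``meant to be obtained''), and the induction you sketch does not close: deleting a cycle edge $uv$ gives $Z(G)=Z(G-uv)+Z(G_{u,v})$, and when $G-uv$ is still cyclic and no edge of $G$ avoids both $u$ and $v$ this yields only $2e-3$, so one must analyze the boundary graphs in which every edge meets $\lvrace u,v\rvrace$ for every choice of cycle edge --- an analysis you have not carried out. Worse, the fallback you name, the star-plus-chord count, provably cannot cover the residual case $v\leq 8$, $8\leq e\leq 12$: a star plus a chord contains a triangle, so a connected \emph{triangle-free} graph in that range --- for instance one containing $K_{2,4}$, with $Z(K_{2,4})=21$, or an $8$-cycle, with $Z=47$ --- contains no star-plus-chord subgraph at all, and your argument then gives nothing beyond the trivial $Z\geq e+1$. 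These graphs do all satisfy $Z>13$, so the lemma is safe, but establishing this needs either further subgraph bounds (e.g.\ the $Z$-index of a path on $d$ vertices is the Fibonacci number $F_{d+1}$, and of an $n$-cycle the Lucas number $L_n\geq 18$ for $n\geq 6$) or an honest enumeration of the several hundred connected graphs on at most eight vertices with eight edges, which is far from the ``short direct check'' you assert. As written, the $e\geq 8$ step --- the one piece of the lemma that is not routine computation --- remains a plan with a hole.
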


\subsection{Lemmas for the case where $(m,n)\in \PPP_1$}
Here we show lemmas to prove Theorem \ref{thm:main:1}.
First we show the following lemma to determine the shape of $B^{(1)}$.
  \begin{lemma}
   \label{lemma:B1}
   Let $A^{(1)}$, $B^{(1)}$, $G_{2,1}$ and $G_{4,1}$ be connected graphs
   satisfying
   \begin{align*}
    G_{2,1}\vee A^{(1)} \vee G_{2,1}&=C(3)\\
    Z(G_{2,1}\vee B^{(1)} \vee G_{2,1})&=4,\\
    Z(G_{4,1}\vee A^{(1)}\vee G_{4,1})&= 15.
   \end{align*}
   If $G_{2,1}\vee B^{(1)} \vee G_{2,1}\neq C(4)$,
   then     
   $Z(G_{4,1}\vee B^{(1)}\vee G_{4,1})$ does not equal $8$.
  \end{lemma}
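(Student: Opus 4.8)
The plan is to reduce the statement to a short finite computation built on a single gluing identity for one-point unions. For a graph $G$ with base point $g$ I would set $p=Z(G)$ and let $q$ be the Hosoya index of the graph obtained from $G$ by deleting $g$; by the monotonicity remark $1\le q\le p$. For a kernel $X$ with base points $l$ and $r$, classifying each matching of $G\vee X\vee G$ according to whether it covers $l$ and whether it covers $r$, and applying Lemma \ref{lemma:removeedge}, expresses the index as a quadratic form in $p$ and $q$:
\begin{align*}
Z(G\vee X\vee G)
={}&Z(X-l-r)\,p^{2}+\bigl(Z(X-l)+Z(X-r)-2Z(X-l-r)\bigr)pq\\
&+\bigl(Z(X)-Z(X-l)-Z(X-r)+Z(X-l-r)\bigr)q^{2},
\end{align*}
with the obvious modification when $l=r$. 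I would apply this twice with the \emph{same} $G=G_{4,1}$, once for $X=A^{(1)}$ and once for $X=B^{(1)}$, so that the two hypotheses involving $G_{4,1}$ become numerical constraints on the single pair $(p,q)$.

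First I would pin down $G_{2,1}$ and $B^{(1)}$. Since $Z(G_{2,1}\vee B^{(1)}\vee G_{2,1})=4$, Lemma \ref{lemma:table} gives $\ZZZ_4=\Set{C(4),Q_3(1,1,1)}$, so under the hypothesis $G_{2,1}\vee B^{(1)}\vee G_{2,1}\neq C(4)$ this graph must be the triangle $Q_3(1,1,1)$. Edges add under one-point union, so $2\,e(G_{2,1})+e(B^{(1)})=3$; moreover every cycle of such an iterated one-point union lies inside one of the three factors, so the $3$-cycle of $Q_3(1,1,1)$ lies in a single factor. Placing it in an outer copy of $G_{2,1}$ would force $e(G_{2,1})\ge 3$, contradicting the edge count, so the triangle lies in $B^{(1)}$, whence $e(B^{(1)})=3$, $e(G_{2,1})=0$, and therefore $G_{2,1}=C(1)$ and $B^{(1)}=Q_3(1,1,1)$. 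In particular the a priori possibility $G_{2,1}=C(2)$ is excluded.

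Next, with $G_{2,1}=C(1)$ the relation $G_{2,1}\vee A^{(1)}\vee G_{2,1}=C(3)$ forces $A^{(1)}\cong C(3)$ as a graph, the only freedom being where its base points sit on this three-vertex path. Up to the reflection symmetry and the interchange $l\leftrightarrow r$ there are four configurations (both ends, one end and the centre, both at one end, both at the centre). For each I would feed the corresponding values of $Z(A^{(1)}-l)$ etc.\ into the quadratic form and solve $Z(G_{4,1}\vee A^{(1)}\vee G_{4,1})=15$ over positive integers with $q\le p$. The end-and-centre case gives $p^{2}+pq+q^{2}=15$, which has no solution, and the other three cases give $p^{2}+2pq=15$, $q(4p-q)=15$, and $q^{2}+2pq=15$, whose only admissible solutions are $(p,q)=(3,1),(4,1),(7,1)$ respectively.

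Finally I would evaluate the $B$-side. Because $Q_3(1,1,1)$ is vertex- and edge-transitive, its base points give only two kernels: $l=r$, for which the formula collapses to $Z(G_{4,1}\vee B^{(1)}\vee G_{4,1})=4pq$, and $l\neq r$, for which it is $(p+q)^{2}$. At the three admissible pairs these take the values $4pq\in\Set{12,16,28}$ and $(p+q)^{2}\in\Set{16,25,64}$, none equal to $8$, which is exactly the claim. I expect the main obstacle to be purely organizational: making the two enumerations---four base-point configurations of the path $C(3)$ and two of the triangle---provably exhaustive, and checking that the integer analysis of each quadratic overlooks no solution. Everything past that is mechanical substitution into the gluing identity together with Lemma \ref{lemma:table}.
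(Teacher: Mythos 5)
Your proof is correct, and it takes a genuinely different route from the paper's. Both arguments begin the same way: Lemma \ref{lemma:table} forces $G_{2,1}\vee B^{(1)}\vee G_{2,1}=Q_3(1,1,1)$ in the non-$C(4)$ case, whence $G_{2,1}=C(\dot1)$, $B^{(1)}=Q_3(1,1,1)$ and $A^{(1)}\cong C(3)$ (the paper simply asserts this decomposition step; your cycle-in-a-single-block plus edge-count justification is a welcome filling-in). After that the logical directions are opposite. The paper works from the $B$-side: it implicitly observes that $Z(G_{4,1}\vee Q_3(1,1,1)\vee G_{4,1})=8$ could only happen for $G_{4,1}=C(\dot2)$ (an edge-counting fact it never spells out), and then kills that single candidate by checking $Z(C(\dot2)\vee C(3)\vee C(\dot2))\leq 8<15$ against the $A$-hypothesis. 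You work from the $A$-side: your quadratic gluing identity in $(p,q)=(Z(G_{4,1}),Z(G_{4,1}-g))$ — which is correct, as is its $l=r$ variant — turns $Z(G_{4,1}\vee A^{(1)}\vee G_{4,1})=15$ into one of four Diophantine equations whose only admissible solutions are $(3,1)$, $(4,1)$, $(7,1)$, and then the $B$-side evaluates to $4pq\in\Set{12,16,28}$ or $(p+q)^2\in\Set{16,25,64}$, never $8$. What your approach buys: it is self-contained (no reliance on the paper's unstated implication ``$G_{4,1}\neq C(\dot2)$ implies the $B$-value is not $8$''), it never needs to identify $G_{4,1}$ as a specific graph since everything factors through $(p,q)$, and the same identity would mechanize the paper's other lemmas of this type. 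What the paper's approach buys: brevity, since the tables in Section \ref{subsec:tableofindex} are already available and only one candidate graph ever needs to be excluded. Your case enumerations (four base-point configurations of $C(3)$ up to symmetry, two of the triangle by transitivity) are exhaustive, and your integer analysis of each quadratic is complete, so I see no gap.
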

  \begin{proof}
   Since $Z(G_{2,1}\vee B^{(1)} \vee G_{2,1})=4$,
   the graph $G_{2,1}\vee B^{(1)} \vee G_{2,1}$ 
   is either $Q_3(1,1,1)$ or $C(4)$.
   In this case where $G_{2,1}\vee B^{(1)} \vee G_{2,1}=Q_3(1,1,1)$, 
   the graphs
   $G_{2,1}$ and $B^{(1)}$ are $C(\dot1)$ and $Q_3(1,1,1)$, respectively.
   Since the graph $G_{2,1}\vee A^{(1)} \vee G_{2,1}$ is the caterpillar $C(3)$,
   the graph  $A^{(1)}$ is the caterpillar $C(3)$.
   Since    
   $Z(C(\dot2) \vee C(3) \vee C(\dot2)) \leq 8<15$
   for any base points, 
   the graph $G_{4,1}$ is not $C(\dot2)$.
   Since $G_{4,1}\neq C(\dot2)$,
   it follows that
   $Z(G_{4,1}\vee Q_3(1,1,1)\vee G_{4,1})\neq 8$.
  \end{proof}

  Next we show three lemmas to determine $C^{(1)}$.
  \begin{lemma}
   \label{lemma:C1a}
   For 
   $C^{(1)}=C(5)$ and $G_{4,1}=C(\dot3)$, 
   the Hosoya index
   $Z(G_{4,1}\vee C^{(1)} \vee G_{4,1})$ does not equal $17$ for any
   base points.
  \end{lemma}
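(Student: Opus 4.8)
The plan is to reduce the statement to a finite case check, since both graphs are small and explicit. First I would identify them concretely: $C(5)$ is the star $K_{1,4}$ with central vertex $(1,1)$ and four leaves, while $G_{4,1}=C(\dot3)$ is the path on three vertices whose base point is its central vertex, so that $Z(C(3))=3$ and the restriction of $C(3)$ to its two leaves has Hosoya index $1$. The one-point union $G_{4,1}\vee C^{(1)}\vee G_{4,1}$ attaches a copy of $C(3)$, identified along its centre, to each of the two base points chosen on $C(5)$.

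Next I would exploit the symmetry of $C(5)$. Its automorphism group fixes the centre and acts transitively on the four leaves, and the construction is moreover symmetric under interchanging the left and right base points (the two attached copies of $C(\dot3)$ are identical). Hence, up to isomorphism of the resulting graph, there are only four choices of the unordered pair of base points on $C(5)$: both at the centre; one at the centre and one at a leaf; both at the same leaf; and at two distinct leaves. It therefore suffices to compute $Z$ in these four configurations.

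Then I would evaluate each case using Lemma \ref{lemma:removeedge} together with the multiplicativity of $Z$ over disjoint unions. When both base points are the centre, the contraction merges the centre of $C(5)$ with the centres of both attached copies of $C(3)$, producing the star $K_{1,8}=C(9)$, so $Z=9$. In the other three cases I would apply Lemma \ref{lemma:removeedge} to an edge of $C(5)$ joining the centre to an affected leaf; this decomposes the graph into smaller stars and paths whose Hosoya indices are immediate, and a short (in the last case iterated) computation yields $Z=19$ for centre-and-leaf, $Z=21$ for the same leaf twice, and $Z=33$ for two distinct leaves. Since none of $9$, $19$, $21$, $33$ equals $17$, the lemma follows.

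The arithmetic is entirely routine, so the only real point requiring care is the completeness of the case distinction; in particular I would treat explicitly the degenerate configurations in which the two base points coincide, where the contraction identifies the two attached copies of $C(3)$ at a single vertex rather than producing two separate pendant stars. Once those configurations are accounted for, the four values above exhaust all possibilities, and this bookkeeping is the main (and only mild) obstacle.
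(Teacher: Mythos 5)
Your proposal is correct and matches the paper's own argument: the paper likewise reduces to the four base-point configurations on $C(5)$ (written there as $C(\ddot5)$, $C(\dot1,\dot4)$, $C(\ddot1,4)$, $C(\dot1,3,\dot1)$) and computes the same four Hosoya indices $9$, $19$, $21$, $33$, none of which is $17$. The only cosmetic difference is that the paper records the resulting graphs as caterpillars $C(9)$, $C(3,6)$, $C(5,4)$, $C(3,3,3)$ and cites "direct calculation," whereas you spell out the symmetry reduction and the edge-deletion recursion explicitly.
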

  \begin{proof}
   It follows from direct calculation that
   \begin{align*}
    Z(G_{4,1}\vee C^{(1)}\vee G_{4,1})&=
    \begin{cases}
     Z(C(9))=9 &(C^{(1)}=C(\ddot5))\\
     Z(C(3,6))=19&(C^{(1)}=C(\dot1,\dot4))\\
     Z(C(3,3,3))=33&(C^{(1)}=C(\dot1,3,\dot1))\\
     Z(C(5,4))=21&(C^{(1)}=C(\ddot1,4)).
    \end{cases}
   \end{align*}  
  \end{proof}

  \begin{lemma}
   \label{lemma:C1b}
   Let $C^{(1)}=C(2,2)$ and $G_{4,1}=C(\dot3)$.
   If  $C^{(1)}\neq C(\dot2,\dot2)$ and $C^{(1)}\neq C(\ddot1,1,2)$,
   then 
   the Hosoya index $Z(G_{4,1}\vee C^{(1)} \vee G_{4,1})$
   does not equal $17$.
  \end{lemma}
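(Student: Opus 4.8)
The plan is to observe that the lemma fixes everything except the base points on $C^{(1)}$, so the proof is a finite case check over those base points. As a graph $C(2,2)$ is the path on four vertices, which we may equally write as $C(1,1,1,1)$, and $G_{4,1}=C(\dot3)$ is the three-vertex star whose base point is its center. Forming a one-point union of $G_{4,1}$ with a vertex $v$ at that center merely attaches two pendant edges at $v$; hence $G_{4,1}\vee C^{(1)}\vee G_{4,1}$ is the path $C(2,2)$ with two leaves added at the left base point and two leaves added at the right base point (four leaves when the base points coincide). In every case the result is again a caterpillar, so by $Z(C(a_1,\ldots,a_d))=K(a_1,\ldots,a_d)$ its Hosoya index is a continuant that I can read off directly.

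First I would list the base-point pairs of the path $C(2,2)$ up to its reflection symmetry. The vertices split into the two endpoints and the two interior vertices, so the unordered pairs $\{l,r\}$, allowing $l=r$, fall into six classes: a coincident endpoint, a coincident interior vertex, the two endpoints, the two interior vertices, an endpoint with its adjacent interior vertex, and an endpoint with the far interior vertex. Two of these are exactly the decorations excluded in the hypothesis: the two interior vertices give $C(\dot2,\dot2)$ and the coincident endpoint gives $C(\ddot1,1,2)$. Thus four cases remain.

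For each surviving case I would attach the leaves as above, identify the resulting caterpillar, and evaluate its continuant. Adding four leaves at a single interior vertex yields $C(6,2)$ with index $13$; an endpoint together with the adjacent interior vertex yields $C(3,3,1,1)\cong C(3,3,2)$ with index $23$; an endpoint together with the far interior vertex yields $C(3,1,3,1)\cong C(3,1,4)$ with index $19$; and the two endpoints yield $C(3,1,1,3)$ with index $25$. Since none of $13$, $23$, $19$, $25$ equals $17$, and these four configurations exhaust all base-point choices other than the two excluded ones, the lemma follows.

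The computations themselves are routine continuant evaluations, so the only real point of care is the combinatorial bookkeeping in the enumeration step: one must be sure to include the coincident cases $l=r$ (which are easy to miss) and to pair each configuration correctly with its excluded or surviving caterpillar. Re-deriving an index through a second caterpillar presentation, for instance checking $C(3,3,1,1)$ against $C(3,3,2)$, gives a cheap internal consistency check.
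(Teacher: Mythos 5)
Your proposal is correct and follows essentially the same route as the paper: the paper's proof is exactly this finite case check over the four base-point configurations not excluded by the hypothesis, yielding $Z(C(3,3,2))=23$, $Z(C(3,1,4))=19$, $Z(C(3,1,1,3))=25$, and $Z(C(6,2))=13$, none of which is $17$. Your enumeration of the six symmetry classes of base-point pairs and your identifications (e.g.\ $C(3,3,1,1)\cong C(3,3,2)$) agree with the paper's four listed cases, so there is nothing to add.
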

  \begin{proof}
   It follows from direct calculation that
   \begin{align*}
    Z(G_{4,1}\vee C^{(1)} \vee G_{4,1})=
    \begin{cases}
     Z(C(3,3,2))=23&(C^{(1)}=C(\dot1,\dot1,2))\\
     Z(C(3,1,4))=19&(C^{(1)}=C(\dot1,1,\dot2))\\
     Z(C(3,1,1,3))=25&(C^{(1)}=C(\dot1,1,1,\dot1))\\
     Z(C(6,2))=13&(C^{(1)}=C(\ddot2,2)).
    \end{cases}
   \end{align*}
   \end{proof}
   
   \begin{lemma}
    \label{lemma:C1c}
    If $B^{(1)}=C(\ddot4)$ and $C^{(1)}=C(\ddot1,1,2)$,
    then there does not exist a connected graph $G_{5,2}$ such that
    $Z(G_{5,2}\vee B^{(1)}\vee G_{5,2})=20$ and
    $Z(G_{5,2}\vee C^{(1)}\vee G_{5,2})=29$.
   \end{lemma}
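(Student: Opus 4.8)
The plan is to reduce everything to two numerical invariants of the unknown graph and then show that the resulting Diophantine system is inconsistent. Write $P = Z(G_{5,2})$ and let $p$ denote the number of matchings of $G_{5,2}$ that leave its base point uncovered, so that $1\le p\le P$ (the empty matching is always counted, and every matching counted by $p$ is in particular a matching of $G_{5,2}$). The first step is to express $Z(G_{5,2}\vee B^{(1)}\vee G_{5,2})$ and $Z(G_{5,2}\vee C^{(1)}\vee G_{5,2})$ in terms of $P$ and $p$ alone. Since $B^{(1)}=C(\ddot4)$ and $C^{(1)}=C(\ddot1,1,2)$ each carry a single base point, both one-point unions identify the two copies of the base point of $G_{5,2}$ together with the base point of the kernel into one common vertex $w$. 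Classifying the matchings of each union according to whether $w$ is uncovered, covered from the left $G_{5,2}$, covered from the right $G_{5,2}$, or covered from the kernel, and reading off from $Z(C(\ddot4))=4$ that the star has $1$ matching uncovered at its center and $3$ covered there, while from $Z(C(\ddot1,1,2))=5$ the path has $3$ matchings uncovered at its endpoint and $2$ covered there, I obtain
\begin{align*}
 Z(G_{5,2}\vee B^{(1)}\vee G_{5,2}) &= 2p(P+p),\\
 Z(G_{5,2}\vee C^{(1)}\vee G_{5,2}) &= p(6P-p).
\end{align*}
Each formula can be derived directly from this four-case count, or equivalently by repeated application of Lemma \ref{lemma:removeedge} to the edges incident to $w$ together with the multiplicativity of $Z$ on disjoint unions.

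With the formulas in hand, the two required Hosoya indices become the system
\begin{align*}
 2p(P+p)&=20,\\
 p(6P-p)&=29.
\end{align*}
Now I would argue purely arithmetically. The first equation gives $p(P+p)=10$, so $p\le 10$. In the second equation $29$ is prime and $6P-p\ge 6p-p=5p>0$, so the factor $p$ divides $29$, forcing $p=1$ (the alternative $p=29$ is excluded since $p\le 10$). Substituting $p=1$ into the two equations yields $P+1=10$ and $6P-1=29$, i.e.\ $P=9$ and $P=5$ simultaneously, a contradiction. Hence the system has no solution in positive integers, so no graph $G_{5,2}$ with the prescribed indices exists, which is exactly the claim.

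The only genuinely nontrivial step is the first one: verifying that the Hosoya index of each three-fold one-point union depends on $G_{5,2}$ solely through the pair $(P,p)$, and computing the exact dependence. I do not expect this to be difficult, since both kernels are tiny and the needed covered/uncovered matching counts are read off by inspection; the only care required is bookkeeping the four cases for how the central vertex $w$ is covered. It is worth noting that connectivity of $G_{5,2}$ plays no role in the argument, so the conclusion in fact rules out every graph, connected or not.
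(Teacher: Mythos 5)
Your proof is correct---I checked both union formulas, $Z(G_{5,2}\vee B^{(1)}\vee G_{5,2})=2p(P+p)$ and $Z(G_{5,2}\vee C^{(1)}\vee G_{5,2})=p(6P-p)$, as well as the divisibility endgame---and it follows essentially the same strategy as the paper: classify matchings according to how the identified vertex is covered, thereby expressing both Hosoya indices through two integer invariants of $G_{5,2}$, and then show the resulting two-equation Diophantine system is inconsistent. The only difference is executional: the paper keeps the coarser unknowns $x_1=Z(G_v)^2$ and $x_2=Z(G_{5,2}\vee C(\ddot1)\vee G_{5,2})$, obtaining the linear system $3x_1+x_2=20$, $2x_1+3x_2=29$, whose unique rational solution $(31/7,47/7)$ is not integral, whereas you expand all the way down to $(P,p)=(Z(G_{5,2}),Z(G_v))$ and finish the resulting quadratic system with the primality of $29$.
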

   \begin{proof}
    Let $G$ be a graph with a base point $v$, 
    and $G_v$ the restriction of $G$
    to vertices other than the vertex $v$.
    Since $B^{(1)}=C(\ddot4)$, we have
    \begin{align*}
     Z(G\vee B^{(1)}\vee G)=3Z(G_v)^2+Z(G\vee C(\ddot1) \vee G).
    \end{align*}
    Since $C^{(1)}=C(\dot1,1,2)$, we have
    \begin{align*}
     Z(G\vee C^{(1)}\vee G)=2Z(G_v)^2+3Z(G\vee C(\ddot1) \vee G).
    \end{align*}
     The following system of equation, however, has no integer solution:
    \begin{align*}
     \begin{pmatrix}
      3&1\\
      2&3
     \end{pmatrix}
     \begin{pmatrix}
      x_1\\
      x_2
     \end{pmatrix}
     =\begin{pmatrix}
       20\\
       29
      \end{pmatrix}.
    \end{align*}
\end{proof}

  \subsection{Lemmas for the case where $(m,n)\in \PPP_2$}
  Here we show lemmas to prove Theorem \ref{thm:main:2}.
  First we show the following lemma to determine the shape of $A^{(2)}$.
  \begin{lemma}
   \label{lemma:A2}
   Let $A^{(2)}$, $G_{3,2}$ and $G_{4,3}$ be connected graphs.
   Assume that $Z(G_{4,3}\vee A^{(2)}\vee G_{4,3})= 7$.
   If $G_{4,3}\vee A^{(2)}\vee G_{4,3}\neq C(7)$,
   then $Z(G_{3,2}\vee A^{(2)} \vee G_{3,2})$ does not equal $5$.   
  \end{lemma}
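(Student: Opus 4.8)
The plan is to reduce the statement to the finite table of small Hosoya indices (Lemma~\ref{lemma:table}) and then, for each surviving shape of $A^{(2)}$, either bound or explicitly compute $Z(G_{3,2}\vee A^{(2)}\vee G_{3,2})$. Since $Z(G_{4,3}\vee A^{(2)}\vee G_{4,3})=7$, Lemma~\ref{lemma:table} gives $\ZZZ_7=\{C(7),\,C(2,2,1),\,Q_4(1,1,1,1)\}$, so the hypothesis $G_{4,3}\vee A^{(2)}\vee G_{4,3}\neq C(7)$ leaves the two cases $C(2,2,1)$ and $Q_4(1,1,1,1)$. In each case I would determine exactly which connected $A^{(2)}$ (with its two base points) and which connected $G_{4,3}$ can glue to that graph as $G_{4,3}\vee A^{(2)}\vee G_{4,3}$.

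The enumeration rests on two elementary facts about one-point unions. First, edge counts add, so $2\,\#E(G_{4,3})+\#E(A^{(2)})=4$. Second, because distinct factors of a one-point union meet in a single vertex, every cycle of the union lies entirely within one factor. For $Q_4(1,1,1,1)$, which is the $4$-cycle, a cycle inside the doubled factor $G_{4,3}$ would force two cycles, so the cycle must sit in $A^{(2)}$; with the edge count this forces $A^{(2)}=Q_4(1,1,1,1)$ and $G_{4,3}=C(1)$. For the tree $C(2,2,1)$ the options are: $G_{4,3}=C(1)$ and $A^{(2)}=C(2,2,1)$; or $G_{4,3}=C(2)$ and $A^{(2)}=C(3)$; the two-edge choice $G_{4,3}=C(3)$ (forcing $A^{(2)}=C(1)$) is excluded since $C(3)\vee C(3)$ has index $5$ or $8$, never $7$. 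In the middle option only two base-point placements on the path $C(3)$ reassemble to $C(2,2,1)$, namely the two base points at the centre and an end, or both at the same end; the remaining placements give $C(5)$ or $C(2,1,2)$, of index $5$ and $8$.

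With $A^{(2)}$ pinned down I finish each case. Whenever $A^{(2)}$ equals $Q_4(1,1,1,1)$ or $C(2,2,1)$, it is a subgraph of $G_{3,2}\vee A^{(2)}\vee G_{3,2}$ with $Z(A^{(2)})=7$, so monotonicity of the Hosoya index under taking subgraphs gives $Z(G_{3,2}\vee A^{(2)}\vee G_{3,2})\ge 7>5$. The only computational case is $A^{(2)}=C(3)$. Writing $g=Z(G_{3,2})$ and $g_0=Z((G_{3,2})_v)$ for the base point $v$, repeated use of Lemma~\ref{lemma:removeedge} expresses $Z(G_{3,2}\vee C(3)\vee G_{3,2})$ as a quadratic form in $g,g_0$ whose coefficients record how the three matchings of $C(3)$ meet the base points: it is $g^2+gg_0+g_0^2$ when the base points are the centre and an end, and $4gg_0-g_0^2$ when they coincide at an end. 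I then check these never equal $5$ for integers $g\ge g_0\ge 1$: the first equals $3$ at $g=g_0=1$ and is $\ge 7$ once $g\ge 2$, while the second factors as $g_0(4g-g_0)=5$, forcing $g_0\in\{1,5\}$ and hence the non-integer $g=3/2$.

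I expect the main obstacle to be the bookkeeping in the enumeration: correctly listing all pairs $(A^{(2)},G_{4,3})$ that glue to $C(2,2,1)$, tracking the placement of the two base points, and confirming that every discarded placement really produces an index other than $7$. Once the two quadratic forms attached to $A^{(2)}=C(3)$ are identified, the arithmetic that rules out the value $5$ is routine.
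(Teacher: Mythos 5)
Your proof is correct, and its skeleton---reducing via the table of $\ZZZ_7$ to the two candidates $C(2,2,1)$ and $Q_4(1,1,1,1)$, then enumerating which pairs $(A^{(2)},G_{4,3})$ can glue to each---is exactly the paper's; your list of decompositions of $C(2,2,1)$ coincides with the paper's three candidates $(C(2,3),C(\dot1))$, $(C(2,\ddot1),C(\dot2))$, $(C(\dot1,\dot2),C(\dot2))$, and your block/edge-count argument pinning down the $Q_4(1,1,1,1)$ case is a justified version of what the paper merely asserts. Where you genuinely diverge is in ruling out $Z(G_{3,2}\vee A^{(2)}\vee G_{3,2})=5$. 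The paper notes that the edge counts of $G_{3,2}\vee A^{(2)}\vee G_{3,2}$ and $G_{4,3}\vee A^{(2)}\vee G_{4,3}$ differ by the even number $2(\#E(G_{3,2})-\#E(G_{4,3}))$, so the former would have to be the unique even-edge member $C(5)$ of $\ZZZ_5$, and then checks that no candidate $A^{(2)}$ can be completed to $C(5)$. You instead extend the monotonicity argument to $A^{(2)}=C(2,2,1)$ (the paper uses it only for $Q_4(1,1,1,1)$), which disposes of that case more simply, and for the remaining case $A^{(2)}=C(3)$ you derive the closed forms $g^2+gg_0+g_0^2$ (base points at centre and end) and $g_0(4g-g_0)$ (base points coinciding at an end), with $g=Z(G_{3,2})$ and $g_0=Z((G_{3,2})_v)$; both formulas are correct (they follow from two applications of Lemma~\ref{lemma:removeedge}, and they return $7$ at $(g,g_0)=(2,1)$ as they must), the constraint $g\ge g_0\ge 1$ is valid, and your arithmetic showing neither form represents $5$ is sound. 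Your route is self-contained in the decisive case---it needs neither the classification of $\ZZZ_5$ nor the parity trick, and it quantifies over all connected $G_{3,2}$ at once through the pair $(g,g_0)$---at the cost of two small computations; the paper's route leans on its precomputed tables and is correspondingly shorter. Both constitute complete proofs of the lemma.
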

  \begin{proof}
   Since $Z(G_{4,3}\vee A^{(2)}\vee G_{4,3})=7$,
   the graph $G_{4,3}\vee A^{(2)}\vee G_{4,3}$ is  $C(2,3)$, $C(7)$ or $Q_4(1,1,1,1)$.
   
   Consider the case where $G_{4,3}\vee A^{(2)}\vee G_{4,3}=Q_4(1,1,1,1)$.
   In this case,  
   the graphs $A^{(2)}$ and $G_{4,3}$ are $Q_4(1,1,1,1)$ and
   $C(\dot1)$, respectively.
   Hence
   we have 
   \begin{align*}
    Z(G_{3,2}\vee A^{(2)} \vee G_{3,2}) \geq Z(A^{(2)}) =7.
   \end{align*}

   Consider the case where $G_{4,3}\vee A^{(2)}\vee G_{4,3}=C(2,3)$.
   In this case, candidates of $(A^{(2)},G_{4,3})$ are the following:
   \begin{align*}
    &(C(2,3),C(\dot1)),&
    &(C(2,\ddot 1),C(\dot2)),&
   &(C(\dot 1,\dot 2),C(\dot2)).&
   \end{align*}
   Since the difference of the numbers of edges of the graphs
   $G_{4,3}\vee A^{(2)}\vee G_{4,3}$ and
   $G_{3,2}\vee A^{(2)}\vee G_{3,2}$
   is even,
   the graph $G_{3,2}\vee A^{(2)}\vee G_{3,2}$ has even edges.
   Only $C(5)$ is a graph $G$ with even edges such that $Z(G)=5$.
   However,
   for any candidates of $A^{(2)}$,
   there does not exists $G_{3,2}$
   such that $G_{3,2}\vee A^{(2)}\vee G_{3,2}=C(5)$.
  \end{proof}

  Next we show three lemmas to determine the shape of $B^{(2)}$.
  \begin{lemma}
   \label{lemma:caseA}
   Let $A^{(2)}=C(\ddot5)$ and $G_{4,3}=C(\dot2)$.
   The Hosoya index
   $Z(G_{7,4}\vee B^{(2)} \vee G_{7,4})$
   does not equal
   $56$
   for connected graphs $B^{(2)}$ and $G_{7,4}$
   such that
   $Z(B^{(2)})=12$,
   $Z(G_{7,4}\vee A^{(2)} \vee G_{7,4})=33$, and 
   $Z(G_{4,3}\vee B^{(2)}\vee G_{4,3})= 24$.
  \end{lemma}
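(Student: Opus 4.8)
The plan is to reduce the statement to a single composition formula for the Hosoya index of a one-point union $G\vee S\vee G$, expressed only through the two integers $p=Z(G)$ and $q=Z(G_g)$ together with the Hosoya indices of $S$ and of its vertex-restrictions. Writing $l,r$ for the two base points of $S$, a bookkeeping of matchings according to whether each gluing vertex is matched by an edge of $G$ or of $S$ (equivalently, iterating the disjoint-union lemma together with Lemma \ref{lemma:removeedge}) should give, when $l\neq r$,
\begin{align*}
 Z(G\vee S\vee G)=q^2 Z(S)+q(p-q)(Z(S_l)+Z(S_r))+(p-q)^2 Z(S_{l,r}),
\end{align*}
and, when $l=r=v$,
\begin{align*}
 Z(G\vee S\vee G)=q^2 Z(S)+2q(p-q)Z(S_v).
\end{align*}
I would first check these against known values, e.g. $G=C(\dot2)$ and $S=C(\ddot5)$ must reproduce $7$, to fix the conventions.

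Next I would pin down $G_{7,4}$. Applying the coincident-base-point formula to $S=A^{(2)}=C(\ddot5)$, where $Z(S)=5$ and $Z(S_v)=1$, the hypothesis $Z(G_{7,4}\vee A^{(2)}\vee G_{7,4})=33$ becomes $2pq+3q^2=33$, that is $q(2p+3q)=33$, with $p=Z(G_{7,4})$, $q=Z((G_{7,4})_g)$, and $1\leq q\leq p$ (since $G_g$ is an induced subgraph of $G$). Running over the divisors of $33$ leaves the single solution $(p,q)=(15,1)$. The crucial observation is that the composition formula depends on $G_{7,4}$ only through the pair $(p,q)$, so no further information about $G_{7,4}$ is needed.

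Finally I would substitute $(p,q)=(15,1)$, hence $p-q=14$, into the two formulas with $S=B^{(2)}$ and $Z(B^{(2)})=12$. If the base points of $B^{(2)}$ are distinct, the value is $12+14(Z(B^{(2)}_l)+Z(B^{(2)}_r))+196\,Z(B^{(2)}_{l,r})$, and since $Z(B^{(2)}_{l,r})\geq 1$ the last term alone already exceeds $196>56$. If the base points coincide, the value is $12+28\,Z(B^{(2)}_v)$, and $56$ is not of this shape because $44$ is not divisible by $28$. Either way $Z(G_{7,4}\vee B^{(2)}\vee G_{7,4})\neq 56$, which is the claim. I note that this argument does not actually invoke the hypothesis $Z(G_{4,3}\vee B^{(2)}\vee G_{4,3})=24$; that condition serves as an independent check, since with $G_{4,3}=C(\dot2)$ it forces $Z(B^{(2)}_v)=6$ in the coincident case, giving the value $12+28\cdot6=180$.

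The main obstacle will be establishing the composition formula cleanly, and in particular getting the coincident-base-point case right: there the two gluings share a vertex and interact, so the generic formula fails and must be replaced by the second display. Once that formula is in hand, the remaining work — solving $2pq+3q^2=33$ uniquely and reading off the size and divisibility mismatch at $56$ — is routine arithmetic.
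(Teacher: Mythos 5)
Your proof is correct and takes a genuinely different route from the paper's. The paper argues by cases on $G_{7,4}$: either $G_{7,4}=C(\dot{15})$, or $G_{7,4}$ contains $C(\dot1,2)$ together with its base point, and then subgraph monotonicity of $Z$ pushes $Z(G_{7,4}\vee B^{(2)}\vee G_{7,4})$ past $56$ --- except for the exceptional kernels $B^{(2)}=C(\ddot{12})$ and $B^{(2)}=Q_3(1,1,\ddot5)$ (read off from Lemma \ref{lemma:table}), where that bound gives only $52$, respectively exactly $56$, and the paper must invoke the hypothesis $Z(G_{4,3}\vee B^{(2)}\vee G_{4,3})=24$ (via $Z(C(14))=14$ and $Z(Q_3(1,1,7))=16$) to exclude them. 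You instead use exact gluing identities, and both of your displayed formulas are correct: they follow by classifying matchings of $G\vee S\vee G$ according to which piece covers each glued vertex, and in the coincident case they reduce to the identity $Z(G\vee C(\ddot4)\vee G)=3Z(G_v)^2+Z(G\vee C(\ddot1)\vee G)$ that the paper itself uses in Lemma \ref{lemma:C1c}. Your arithmetic is also right: $q(2p+3q)=33$ with $1\le q\le p$ (valid since $(G_{7,4})_g$ is a subgraph of $G_{7,4}$) has the unique solution $(p,q)=(15,1)$, after which $196\,Z(B^{(2)}_{l,r})\ge 196>56$ handles distinct base points and $28\nmid 44$ handles coincident ones. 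Your route buys a strictly stronger statement --- the third hypothesis is never used and $G_{7,4}$ need never be identified as a graph --- whereas the paper's route stays entirely at the level of subgraph bounds and finite tables, at the price of a more delicate case analysis that genuinely consumes all three hypotheses. The only work left in writing yours up is the clean derivation of the two composition formulas, which is routine bookkeeping and which you have already sanity-checked against $Z(C(\dot2)\vee C(\ddot5)\vee C(\dot2))=7$.
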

  
  \begin{proof}
   First consider the case where $G_{7,4}= C(\dot{15})$.
   In this case,
   we have $G_{7,4}\vee A^{(2)} \vee G_{7,4}=C(33)$,
   which implies $Z(G_{7,4}\vee A^{(2)} \vee G_{7,4})=33$.
   If we choose two different base points of $B^{(2)}$ 
   for the one-point union $G_{7,4}\vee B^{(2)} \vee G_{7,4}$,
   then the graph $G_{7,4}\vee B^{(2)} \vee G_{7,4}$ contains
   $C(15,1,\ldots,1,15)$ as a subgraph.
   Since 
   \begin{align*}
    Z(C(15,1,\ldots,1,15))\geq Z(C(15,15))=15\cdot 15+1>56,  
   \end{align*}
   we obtain 
   \begin{align*}
    Z(G_{7,4}\vee B^{(2)} \vee G_{7,4})>56.
   \end{align*}
   If we choose the same base points $v$ of $B^{(2)}$ 
   for left and right one-point unions of $G_{7,4}\vee B^{(2)} \vee G_{7,4}$,
   then
   \begin{align*}
    Z(G_{7,4}\vee B^{(2)} \vee G_{7,4})=Z( B^{(2)} \vee C(\dot{29})). 
   \end{align*}
   Let $B_v$ be 
   the restriction of $B^{(2)}$
    to  vertices other than the vertex $v$.
   Since 
    \begin{align*}
     Z( B^{(2)} \vee C(\dot{29}))=28Z(B_v)+Z(B^{(2)})=28Z(B_v)+12,
    \end{align*}
   we obtain 
   \begin{align*}
       Z(G_{7,4}\vee B^{(2)} \vee G_{7,4})\neq 56.
   \end{align*}

   Next we consider the case where
   $G_{7,4}\neq C(\dot{15})$.
   In this  case,
   there exits a vertex $u$ of the graph $G_{7,4}$ such that
   the distance between $u$ and the base point is greater than or equal
   to $2$.
   In other words,
   the graph $G_{7,4}$ contains $C(\dot1,2)$ as a subgraph with a base point.
   Hence
   \begin{align*}
    Z(G_{7,4}\vee B^{(2)} \vee G_{7,4})\geq Z(C(\dot1,2) \vee B^{(2)}\vee C(\dot1,2)). 
   \end{align*}

   Let $B_{u}$ (\textit{resp.} $B_{v}$, $B_{u,v}$)  be 
   the restriction of $B^{(2)}$
   to vertices other than 
   the left (\textit{resp.} right, both) 
   base point of $B^{(2)}$ for the one-point sum 
   $G_{7,4}\vee B^{(2)} \vee G_{7,4}$.
   If $u\neq v$, then we have 
   \begin{align*}
    Z(C(\dot1,2) \vee B^{(2)}\vee C(\dot1,2))= 4Z(B^{(2)})+2Z(B_u)+2Z(B_v)+B_{u,v}. 
   \end{align*}
   Since $Z(B^{(2)})=12$, we have
   \begin{align*}
    &Z(G_{7,4}\vee B^{(2)} \vee G_{7,4})\\
    &\geq Z(C(\dot1,2) \vee B^{(2)}\vee C(\dot1,2))\\
    &= 4Z(B^{(2)})+2Z(B_u)+2Z(B_v)+B_{u,v}\\
    &\geq 48+2Z(B_v)+2Z(B_u)+1.
   \end{align*}
   If $Z(B_v),Z(B_u)\geq 2$, then 
   $Z(G_{7,4}\vee B^{(2)} \vee G_{7,4})>57$.
   If $Z(B_v)\leq 1$ and $Z(B^{(2)})=12$, then 
   the graph $B^{(2)}$ is $C(\dot1,\dot{11})$.
   However, in this case, $B_u=C(11)$ and $Z(B_u)=11\geq  2$.

   If $u=v$, then 
   the graph $C(\dot1,2) \vee B^{(2)}\vee C(\dot1,2)$
   is $B^{(2)}\vee C(2,\dot1,2)$.
   Since $Z(B^{(2)}\vee C(2,\dot1,2))=4Z(B_u)+4Z(B^{(2)})$,
   we have 
   \begin{align*}
    Z(G_{7,4}\vee B^{(2)} \vee G_{7,4})
    &\geq 4Z(B_u)+4Z(B^{(2)})\\
    &=4Z(B_u)+48.
   \end{align*}
   Hence,
   if $B^{(2)}\not\in\Set{ C(\ddot k), Q_3(1,1,\ddot k), C(\ddot k,2)}$,
   then 
   \begin{align*}
    Z(C(\dot1,1,1)\vee B^{(2)}\vee C(\dot1,1,1))>56.
   \end{align*}
   If the graph $B^{(2)}$ is 
   $C(\ddot k)$, $Q_3(1,1,\ddot k)$ or $C(\ddot k,2)$,
   then 
   the graph $B^{(2)}$ is 
   either $C(\ddot{12})$ or $Q_3(1,1,5)$
   since $Z(B^{(2)})=12$.
   However,
   since $Z(C(14))=14$ and $Z(Q_3(1,1,7))=16$,
   these two graphs do not satisfy the equation
   $Z(G_{4,3}\vee B^{(2)}\vee G_{4,3}) =24$.
  \end{proof}

  \begin{lemma}
   \label{lemma:caseC}
   Let $G_{3,2}=C(\dot3)$, $G_{4,3}=C(\dot4)$.
   For a connected graph $B^{(2)}$ satisfying
   $Z(G_{3,2}\vee B^{(2)} \vee G_{3,2})=12$,
   the Hosoya index $Z(G_{4,3}\vee B^{(2)}\vee G_{4,3})$
   does not equal
   $24$.
  \end{lemma}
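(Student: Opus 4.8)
The plan is to express both Hosoya indices as linear forms in a handful of non-negative integer invariants of $B^{(2)}$, and then to show that the prescribed values $12$ and $24$ cannot be attained simultaneously.

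\emph{Setup.} Write $B=B^{(2)}$ and let $u$ and $v$ be its left and right base points. The key structural observation is that $G_{3,2}=C(\dot3)$ and $G_{4,3}=C(\dot4)$ are stars whose base point is the centre, so forming $G_{3,2}\vee B$ merely attaches two pendant edges at $u$, while $G_{4,3}\vee B$ attaches three. Applying Lemma \ref{lemma:removeedge} to such a pendant edge (deleting it leaves an isolated vertex, and deleting its two endpoints removes all pendants at that vertex) shows that attaching $k$ pendants at a vertex $w$ increases the index by $k\,Z(B_w)$, where $B_w$ is the restriction of $B$ to the vertices other than $w$.

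\emph{Main case $u\neq v$.} Put $b=Z(B)$, $b_u=Z(B_u)$, $b_v=Z(B_v)$ and $b_{uv}=Z(B_{u,v})$. Iterating the pendant formula first at $u$ and then at $v$ gives
\begin{align*}
 Z(G_{3,2}\vee B\vee G_{3,2})&=b+2b_u+2b_v+4b_{uv},\\
 Z(G_{4,3}\vee B\vee G_{4,3})&=b+3b_u+3b_v+9b_{uv}.
\end{align*}
I would then pass to the coverage counts
\begin{align*}
 p=b_{uv},\quad q=b_u-b_{uv},\quad r=b_v-b_{uv},\quad s=b-b_u-b_v+b_{uv},
\end{align*}
which record, by inclusion--exclusion, the numbers of matchings of $B$ covering neither, exactly one, or both of $u,v$; each is a non-negative integer. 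In these variables the two indices become $9p+3q+3r+s$ and $16p+4q+4r+s$. Setting the first equal to $12$ and supposing the second equals $24$, subtraction gives $7p+q+r=12$, which forces $p\le 1$; substituting back into $9p+3(q+r)+s=12$ yields $s=12p-24<0$, contradicting $s\ge 0$.

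\emph{Degenerate case $u=v$.} Here all pendants are attached at the single base point, so the two indices are $b+4b_u$ and $b+6b_u$. From $b+4b_u=12$ and $b\ge 1$ one gets $b_u\le 2$, whence $b+6b_u=12+2b_u\le 16<24$. Thus in every case $Z(G_{4,3}\vee B\vee G_{4,3})\neq 24$. The computations are routine; the one point requiring care is the bookkeeping of the interaction term $b_{uv}$ (the coefficients $4$ and $9$ coming from pendants placed at both base points at once), together with remembering to split off the coincident-base-point case $u=v$, where the clean four-parameter description degenerates.
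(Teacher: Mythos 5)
Your proof is correct, but it takes a genuinely different route from the paper's. The paper's own proof is a two-line classification argument: since $G_{3,2}\vee B^{(2)}\vee G_{3,2}$ is a connected graph with Hosoya index $12$, it must appear in the list $\ZZZ_{12}$ of Lemma \ref{lemma:table}, and among those graphs only $C(12)$ and $Q_3(1,1,5)$ have the form of a graph with two pendant edges grafted at each base point; hence $B^{(2)}$ is $C(\ddot 8)$ or $Q_3(1,1,\ddot 1)$, and then $Z(G_{4,3}\vee B^{(2)}\vee G_{4,3})$ equals $Z(C(14))=14$ or $Z(Q_3(1,1,7))=16$, neither of which is $24$. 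You bypass the enumeration entirely: you encode $B^{(2)}$ by the inclusion--exclusion counts $p,q,r,s$ of its matchings according to which base points they cover, observe via the pendant identity $Z(B')=Z(B)+k\,Z(B_w)$ (a correct iteration of Lemma \ref{lemma:removeedge}) that the two indices are the linear forms $9p+3q+3r+s$ and $16p+4q+4r+s$, and show that the system with values $12$ and $24$ has no solution in non-negative integers, handling separately the coincident-base-point case where the forms degenerate to $b+4b_u$ and $b+6b_u$. I checked your formulas (attaching $C(\dot3)$ and $C(\dot4)$ does add exactly two, respectively three, pendant edges at each base point), the non-negativity of $p,q,r,s$, and the closing arithmetic ($7p+q+r=12$ forces $p\le 1$, while $s=12p-24\ge 0$ would force $p\ge 2$); all are sound, as is the degenerate bound $12+2b_u\le 16<24$. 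The trade-off: the paper's method leans on the exhaustive table it has already built for the other lemmas and pins down the candidate graphs explicitly, whereas your argument is self-contained, immune to any omission in that enumeration, and proves the slightly stronger fact that no graph whatsoever, with any choice of base points, can realize the pair $(12,24)$ under these attachments; note, however, that your method is specific to infeasible value pairs --- for instance in the setting of Lemma \ref{lemma:caseB} the analogous system does have the solution $(p,q,r,s)=(2,1,1,0)$, realized by $C(\dot1,2,\dot1)$, so the classification step cannot always be avoided.
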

  \begin{proof}  
   Since $G_{3,2}=C(\dot3)$ and $Z(G_{3,2}\vee B^{(2)} \vee G_{3,2})=12$,
   the graph $B^{(2)}$ is either $Q_3(1,1,\ddot1)$ or $C(\ddot{8})$.
   Hence 
   $Z(G_{4,3}\vee B^{(2)}\vee G_{4,3})$ 
   equals $Z(Q_3(1,1,7))=16$ or $Z(C(14))=14$.
  \end{proof}

  \begin{lemma}
   \label{lemma:caseB}
   Let
   $G_{3,2}=C(\dot2)$, and $G_{4,3} = C(\dot3)$.
   Let $B^{(2)}$ be a connected graph such that
   $Z(G_{3,2}\vee B^{(2)}\vee G_{3,2})=12$.
   If
   $B^{(2)}\neq C(\dot1,2,\dot1)$,
   then
   $Z(G_{4,3}\vee B^{(2)}\vee G_{4,3})$ does not equal $24$.
  \end{lemma}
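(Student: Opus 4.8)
The plan is to reduce everything to four Hosoya indices attached to $B^{(2)}$ and then run a short classification. Write $b=Z(B^{(2)})$, and let $b_l=Z(B_l)$, $b_r=Z(B_r)$, $b_{l,r}=Z(B_{l,r})$ be the Hosoya indices of the restrictions of $B^{(2)}$ obtained by deleting its left base point $l$, its right base point $r$, and both, respectively. Since $G_{3,2}=C(\dot2)$ glues a single pendant edge onto each base point while $G_{4,3}=C(\dot3)$ glues two pendant edges onto each, I would first record, by applying Lemma \ref{lemma:removeedge} to these pendant edges (noting that attaching $k$ pendant edges at a vertex $v$ sends $Z$ to $Z(G)+kZ(G_v)$), the expansions
\begin{align*}
 Z(G_{3,2}\vee B^{(2)}\vee G_{3,2}) &= b+b_l+b_r+b_{l,r},\\
 Z(G_{4,3}\vee B^{(2)}\vee G_{4,3}) &= b+2b_l+2b_r+4b_{l,r},
\end{align*}
valid when the base points $l$ and $r$ are distinct; the degenerate case $l=r$ is handled separately at the end.

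The key step is the linear combination. Setting $P=12$ for the first line, the second index $Q$ satisfies $Q-2P=2b_{l,r}-b$, so under the hypothesis $P=12$ one has $Q=24$ if and only if $b=2b_{l,r}$. I would then feed this single equation into the elementary monotonicity inequalities $b_{l,r}\le b_l\le b$ and $b_{l,r}\le b_r\le b$ (deleting vertices cannot increase the Hosoya index). Substituting $b_{l,r}=b/2$ into $P=\tfrac32 b+b_l+b_r=12$ and using $b\le b_l+b_r\le 2b$ squeezes $b$ into the range $24/7\le b\le 24/5$; since $b=2b_{l,r}$ is even, this forces $b=4$, hence $b_{l,r}=2$ and $b_l+b_r=6$.

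It then remains to identify the connected graphs with $Z(B^{(2)})=4$ admitting a choice of two base points whose double deletion has index $2$. By Lemma \ref{lemma:table}, $\ZZZ_4=\Set{C(4),\ Q_3(1,1,1)}$, so $B^{(2)}$ is the star $K_{1,3}=C(4)$ or the triangle $Q_3(1,1,1)$. For the triangle, deleting any two of its three vertices leaves a single vertex, giving $b_{l,r}=1\ne2$, so this case is discarded; for $K_{1,3}$, deleting the center together with a leaf again gives $b_{l,r}=1$, whereas deleting two distinct leaves leaves a single edge with $b_{l,r}=2$. Thus the base points must be two leaves, which is exactly $C(\dot1,2,\dot1)$ (and indeed $b_l=b_r=3$, consistent with $b_l+b_r=6$). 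Finally, when $l=r$ the two formulas degenerate to $b+2b_l=12$ and $b+4b_l$, so $Z(G_{4,3}\vee B^{(2)}\vee G_{4,3})=2\cdot 12-b=24-b<24$, and no graph with coincident base points can yield $24$. Taking the contrapositive gives the lemma. I expect the only delicate points to be the careful bookkeeping in the pendant expansions and checking that the monotonicity squeeze is tight enough to isolate $b=4$; once $b_{l,r}=2$ is forced, the classification via $\ZZZ_4$ is immediate.
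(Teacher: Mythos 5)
Your proof is correct, and it takes a genuinely different route from the paper's. The paper works at the level of the glued graph: since $Z(G_{3,2}\vee B^{(2)}\vee G_{3,2})=12$, that graph must appear in the list $\ZZZ_{12}$ of Lemma \ref{lemma:table}; after disposing of $C(12)$ and $Q_3(1,1,5)$ as in Lemma \ref{lemma:caseC}, peeling off the two pendant edges contributed by the copies of $C(\dot2)$ leaves the four candidates $Q_3(\ddot 1,1,2)$, $Q_3(\dot 2,1,\dot 1)$, $C(\dot 1,\dot 1,2)$, $C(\dot 1,2,\dot 1)$ for $B^{(2)}$ with its base points, and direct computation of $Z(G_{4,3}\vee B^{(2)}\vee G_{4,3})$ for the first three (giving $18$, $20$, $23$) finishes the proof. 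You instead work with the four invariants $b$, $b_l$, $b_r$, $b_{l,r}$ of $B^{(2)}$ alone: your pendant-edge expansions of $P=Z(G_{3,2}\vee B^{(2)}\vee G_{3,2})$ and $Q=Z(G_{4,3}\vee B^{(2)}\vee G_{4,3})$ are correct (including the coincident-base-point case, with the usual convention $Z(\emptyset)=1$), the identity $Q-2P=2b_{l,r}-b$ converts the hypotheses into $b=2b_{l,r}$, and the squeeze legitimately forces $b=4$, $b_{l,r}=2$; note that your lower bound $b\le b_l+b_r$ is valid here not as a general inequality but because it follows from $b_l,b_r\ge b_{l,r}=b/2$ under the derived constraint, which is exactly how you use it. What your route buys: it needs only the trivial table $\ZZZ_4$, i.e.\ graphs with at most three edges, instead of $\ZZZ_{12}$, whose justification rests on the enumeration of all connected graphs with up to seven edges in Section \ref{subsec:tableofindex}; it treats the case $l=r$ explicitly, which the paper covers only implicitly through its enumeration; and the identity explains structurally why the value $24$ singles out $C(\dot 1,2,\dot 1)$, rather than discovering this case by case. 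What the paper's route buys: once Lemma \ref{lemma:table} is granted, it is a purely mechanical finite check, uniform in style with the neighbouring Lemmas \ref{lemma:caseA} and \ref{lemma:caseC}. I see no gap in your argument.
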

  \begin{proof}
    First we consider the case where 
   the graph $G_{3,2}\vee B^{(2)} \vee G_{3,2}$ is 
   either $Q_3(1,1,5)$ or $C(12)$.
    In this case, 
   similarly to Lemma \ref{lemma:caseC},
   it follows that
    $Z(G_{4,3}\vee B^{(2)}\vee G_{4,3})$ is $Z(Q_3(1,1,7))=16$ or $Z(C(13))=14$,
    which are not equal to $24$.

   Next we consider the case  where 
   the graph $G_{3,2}\vee B^{(2)} \vee G_{3,2}$ is neither $Q_3(1,1,5)$
   nor $C(12)$.
   Since 
   \begin{align*}
   Z(G_{3,2}\vee B^{(2)} \vee G_{3,2})=Z(C(\dot2)\vee B^{(2)}\vee C(\dot2))=12, 
   \end{align*}   
   candidates of the graph $G_{3,2}\vee B^{(2)} \vee G_{3,2}$
   are $Q_3(3,1,2)$ and $C(2,2,2)$.
    Hence candidates of $B^{(2)}$  are the following:
   \begin{align*}
    &Q_3(\ddot 1,1,2),&
    &Q_3(\dot 2,1,\dot 1),&
    &C(\dot 1,\dot 1,2),&
    &C(\dot 1,2,\dot 1).
   \end{align*}
   Since $G_{4,3}=C(\dot3)$, 
   we have
   \begin{align*}
    Z(G_{4,3} \vee B^{(2)} \vee G_{4,3})=
    \begin{cases}
     Z(Q_3(5,1,2))=18 &(B^{(2)}=Q_3(\ddot1,1,2))\\
     Z(Q_3(4,1,3))=20 &(B^{(2)}=Q_3(\dot2,1,\dot1))\\
     Z(C(3,3,2))=23 &(B^{(2)}=C(\dot1,\dot1,2)),
    \end{cases}
   \end{align*}
   which implies $Z(G_{4,3} \vee B^{(2)} \vee G_{4,3})\neq 24$.
  \end{proof}

  Next we show a lemma to determine $G_{7,4}$.
  \begin{lemma}
   \label{lemma:G74}
   Let $G_{7,4}$ and $B^{(2)}$ be connected graphs.
   Assume that  $B^{(2)}=C(\dot1,2,\dot1)$.
   If $G_{7,4}\neq C(\dot1,3)$,
   then $Z(G_{7,4}\vee B^{(2)}\vee G_{7,4})$ does not equal $56$.
  \end{lemma}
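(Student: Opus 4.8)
The plan is to reduce $Z(G_{7,4}\vee B^{(2)}\vee G_{7,4})$ to a simple expression in two invariants of $G_{7,4}$, and then to solve a small divisibility constraint. Write $G=G_{7,4}$ with base point $v$, and set $g=Z(G)$ and $g'=Z(G_v)$, where $G_v$ is the restriction of $G$ to the vertices other than $v$. The graph $B^{(2)}=C(\dot1,2,\dot1)$ is the star $K_{1,3}$ whose center $c$ is joined to three leaves: the left base point, the right base point, and one free leaf $w$. Hence in $\Gamma:=G\vee B^{(2)}\vee G$ the center $c$ has exactly three incident edges --- one to the base vertex of the left copy of $G$, one to $w$, and one to the base vertex of the right copy of $G$ --- and the two copies of $G$ are otherwise disjoint.

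First I would compute $Z(\Gamma)$ by conditioning on which edge at $c$, if any, lies in a matching (at most one can, since all three share $c$); this is just Lemma \ref{lemma:removeedge} applied at $c$. If no edge at $c$ is used, the matching lives in the disjoint union of the two intact copies of $G$ together with the now-isolated $w$, contributing $g^2$; using $\ed{c}{w}$ again leaves the two intact copies, contributing $g^2$; and using either edge to a base vertex deletes that base vertex from one copy, contributing $gg'$ on each side. Summing gives
\begin{align*}
 Z(\Gamma)=2g^2+2gg'=2g(g+g').
\end{align*}
Setting this equal to $56$ yields the condition $g(g+g')=28$.

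Next I would pin down the pair $(g,g')$. Since $G_v$ is obtained from $G$ by deleting a vertex, every matching of $G_v$ is a matching of $G$, so $1\le g'\le g$. From $g(g+g')=28$ we get $g\mid 28$ and $g(g+1)\le 28$, which forces $g\in\{1,2,4\}$; computing $g'=28/g-g$ and imposing $g'\le g$ eliminates $g=1$ (giving $g'=27$) and $g=2$ (giving $g'=12$), leaving only $g=4$, $g'=3$. Thus $Z(\Gamma)=56$ forces $Z(G_{7,4})=4$ and $Z((G_{7,4})_v)=3$.

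Finally I would identify the pointed graph. By Lemma \ref{lemma:table}, $\ZZZ_4=\Set{C(4),Q_3(1,1,1)}$, so $G_{7,4}$ is either the star $C(4)=K_{1,3}$ or the triangle $Q_3(1,1,1)=K_3$. Deleting any vertex of $K_3$ leaves a single edge, of index $2\ne 3$, so $Q_3(1,1,1)$ is excluded for every base point. Deleting the center of $C(4)$ leaves three isolated vertices (index $1$), while deleting a leaf leaves $C(3)$ (index $3$); hence the base point must be a leaf, and $C(4)$ with a leaf as base point is exactly $C(\dot1,3)$. Therefore $(g,g')=(4,3)$ forces $G_{7,4}=C(\dot1,3)$, and contrapositively, if $G_{7,4}\ne C(\dot1,3)$ then $Z(\Gamma)\ne 56$. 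The only genuine computation is the four-case count establishing $Z(\Gamma)=2g(g+g')$; the expected mild subtlety is the base-point analysis that must distinguish a leaf of $C(4)$ from its center, since it is only the choice of base point (not the isomorphism type of $G_{7,4}$) that selects the solution.
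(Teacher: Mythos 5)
Your proof is correct, but it takes a genuinely different route from the paper's. The paper argues structurally: it splits into cases according to whether the base point of $G_{7,4}$ has degree at least two (i.e.\ whether $G_{7,4}$ contains $C(\dot 3)$ as a pointed subgraph), and in each case uses monotonicity of $Z$ under subgraph containment to push $Z(G_{7,4}\vee B^{(2)}\vee G_{7,4})$ strictly above $56$ (e.g.\ via $Z(C(2,2,2,2,2))\geq 5\cdot 12$), strictly below it, or to isolate $C(\dot1,3)$ as the only survivor. You instead exploit the fact that $B^{(2)}=C(\dot1,2,\dot1)$ is a claw glued at two of its leaves to get the exact identity $Z(G\vee B^{(2)}\vee G)=2g(g+g')$, where $g=Z(G)$ and $g'=Z(G_v)$, so the hypothesis becomes the Diophantine condition $g(g+g')=28$; the constraints $g\mid 28$ and $1\leq g'\leq g$ then force $(g,g')=(4,3)$, and Lemma \ref{lemma:table} ($\ZZZ_4=\Set{C(4),\ Q_3(1,1,1)}$) plus your base-point check (leaf versus center of $C(4)$, and exclusion of $K_3$) identifies $C(\dot1,3)$ as the unique pointed graph realizing $(4,3)$. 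All steps check out: the four-case count behind the identity is a correct repeated application of Lemma \ref{lemma:removeedge} at the center of the claw, and it matches the paper's value $Z(C(3,1,2,1,3))=56$ when $G_{7,4}=C(\dot1,3)$. What your approach buys is completeness at low cost: the arithmetic automatically covers every connected pointed graph, with no structural case left to worry about, whereas the paper must exhaust local configurations and justify each monotonicity estimate. What the paper's approach buys is uniformity: the same subgraph-bounding technique is reused in the neighboring Lemmas \ref{lemma:caseA}--\ref{lemma:caseC}, where the kernel being tested is not a claw and no equally clean closed formula is available.
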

  \begin{proof}
   First consider the case where the graph $G_{7,4}$ 
   contains $C(\dot 3)$ as a subgraph  with a base point.
   If the graph $G_{7,4}$ contains 
   $C(\dot2,2)$ as a subgraph with a base points,
   then 
   \begin{align*}
    Z(G_{7,4}\vee B^{(2)}\vee G_{7,4})&\geq Z(C(2,2,2,2,2))\\
    &\geq
    Z(C(2,2))\cdot Z(C(2,2,2))=5\cdot12>56.
   \end{align*}
   If the graph $G_{7,4}$ 
   contains $Q_3(1,1,\dot 2)$ as a subgraph with a base points,
   then 
   \begin{align*}
    Z(G_{7,4}\vee B^{(2)}\vee G_{7,4})
&\geq  Z(Q_3(1,1,2))\cdot Z(Q_3(1,1,2;1,1))\\
&=6\cdot 14>56.
   \end{align*}
   If $G_{7,4}=Q_3(1,1,\dot 1)$, then 
   \begin{align*}
    Z(G_{7,4}\vee B^{(2)}\vee G_{7,4})=48<56.
   \end{align*}
   Therefore, if the graph $G_{7,4}$ contains $C(\dot 3)$ as a subgraph  with a
   base point,
   then $Z(G_{7,4}\vee B^{(2)}\vee G_{7,4})\neq 56$.

   Next consider the case where
   the graph $G_{7,4}$ 
   does not contain $C(\dot 3)$ as a subgraph  with a base point.
   If $G_{7,4}=C(\dot1,1,2)$, then
   \begin{align*}
    Z(G_{7,4}\vee B^{(2)}\vee G_{7,4})=Z(C(2,1,1,2,1,1,2))=5\cdot 13+3\cdot 5>56.
   \end{align*}
   If $G=C(\dot1,3)$, then
   $Z(G_{7,4}\vee B^{(2)}\vee G_{7,4})=Z(C(3,1,2,1,3))=56$.
   Hence, if 
   the graph 
   $G_{7,4}$ contains $C(\dot1,1,2)$ as a proper subgraph  with a
   base point, then 
   $Z(G_{7,4}\vee B^{(2)}\vee G_{7,4})>56$.
  \end{proof}

  Finally we show the following lemma to determine $C^{(2)}$.
  \begin{lemma}
   \label{lemma:C2}
   Let $G_{3,2}=C(\dot2)$,
   $G_{4,3}=C(\dot3)$,
   $G_{7,4}=C(\dot1,3)$.
   Let $C^{(2)}$ be a connected graphs satisfying
   $Z(G_{3,2}\vee C^{(2)}\vee G_{3,2})=13$ and 
   $Z(G_{7,4}\vee C^{(2)} \vee G_{7,4})=65$.
   If $C^{(2)}\neq C(\dot1,1,1,\dot1)$,
   then
   $Z(G_{4,3}\vee C^{(2)} \vee G_{4,3})$ does not equal $25$,
  \end{lemma}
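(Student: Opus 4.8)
The plan is to prove the contrapositive: under the two standing hypotheses $Z(G_{3,2}\vee C^{(2)}\vee G_{3,2})=13$ and $Z(G_{7,4}\vee C^{(2)}\vee G_{7,4})=65$, I would assume in addition that $Z(G_{4,3}\vee C^{(2)}\vee G_{4,3})=25$ and deduce $C^{(2)}=C(\dot1,1,1,\dot1)$. The main device is a bilinear expansion of a doubled one-point union, generalizing the computation already used in Lemma \ref{lemma:C1c}.

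First I would establish the following expansion. Let $S$ be a connected graph with a left base point $l$ and a right base point $r$, and let $G$ be a connected graph with base point $g$; write $w=Z(G)$ and $w_g=Z(G_g)$, where $G_g$ is the restriction of $G$ to the vertices other than $g$. Classifying the matchings of $G\vee S\vee G$ according to whether each of the two identified vertices is left uncovered, covered by an edge of $S$, or covered by an edge of one of the copies of $G$, one obtains
\begin{align*}
 Z(G\vee S\vee G)=\alpha w^2+\beta w w_g+\gamma w_g^2,
\end{align*}
where $\alpha$, $\beta$, $\gamma$ depend only on $S$ and its base points: $\alpha=Z(S_{l,r})$ counts the matchings of $S$ covering neither $l$ nor $r$, $\beta$ counts those covering exactly one of $l,r$, and $\gamma$ those covering both, so that $Z(S)=\alpha+\beta+\gamma$. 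When $l=r$ the same quadratic identity holds with $\alpha=0$; hence $\alpha>0$ precisely when the two base points are distinct.

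Next I would record the three relevant pairs $(w,w_g)$: for $G_{3,2}=C(\dot2)$ one has $(w,w_g)=(2,1)$, for $G_{7,4}=C(\dot1,3)$ one has $(4,3)$, and for $G_{4,3}=C(\dot3)$ one has $(3,1)$. The three hypotheses then read
\begin{align*}
 4\alpha+2\beta+\gamma&=13,\\
 16\alpha+12\beta+9\gamma&=65,\\
 9\alpha+3\beta+\gamma&=25.
\end{align*}
The coefficient matrix has determinant $10$, so the system is nonsingular and forces $(\alpha,\beta,\gamma)=(2,2,1)$. Since $\alpha=2\neq0$, the base points of $C^{(2)}$ are distinct, and $Z(C^{(2)})=\alpha+\beta+\gamma=5$, so by Lemma \ref{lemma:table} the underlying graph is either $C(2,2)$ or $C(5)$.

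Finally I would run through the possible placements of the ordered pair of base points on each of $C(5)$ and $C(2,2)$ and compute the triple $(\alpha,\beta,\gamma)$ in each case, discarding every placement whose invariants differ from $(2,2,1)$. For $C(5)=K_{1,4}$ no placement yields $\alpha=Z(S_{l,r})=2$, and for $C(2,2)\cong P_4$ only the choice of the two endpoints gives $(\alpha,\beta,\gamma)=(2,2,1)$ (each other choice yields $\alpha=1$ or the triple $(2,1,2)$), and that choice is exactly $C(\dot1,1,1,\dot1)$; this yields the claim. The only steps demanding care are the correct derivation of the bilinear expansion, in particular the bookkeeping at the two glued vertices and the degenerate case $l=r$, and the short final enumeration of base-point placements; neither is a genuine obstacle, as the invertibility of the $3\times3$ system does the decisive work.
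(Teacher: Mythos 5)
Your proposal is correct, but it takes a genuinely different route from the paper's proof. The paper first extracts the inequality $Z(G_{7,4}\vee C^{(2)}\vee G_{7,4})\geq Z(G_{3,2}\vee C^{(2)}\vee G_{3,2})+8Z(C^{(2)})$, hence $Z(C^{(2)})\leq 6$, and then runs a case analysis: it disposes of $C^{(2)}=C(6)$ by direct computation, and otherwise uses the classification of connected graphs with Hosoya index $13$ (Lemma \ref{lemma:table}) to list the possible shapes of $G_{3,2}\vee C^{(2)}\vee G_{3,2}$ — namely $C(2,1,1,2)$, $C(3,4)$, $Q_4(1,1,1,3)$ — and checks the resulting values of $Z(G_{4,3}\vee C^{(2)}\vee G_{4,3})$ one by one. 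You instead make systematic the bilinear device that the paper uses only in the one-sided setting of Lemma \ref{lemma:C1c}: writing $Z(G\vee S\vee G)=\alpha w^2+\beta w w_g+\gamma w_g^2$ with $(\alpha,\beta,\gamma)$ intrinsic to $S$ and its base points, the three hypotheses become a nonsingular $3\times 3$ linear system (determinant $10$) forcing $(\alpha,\beta,\gamma)=(2,2,1)$; I checked the derivation of the identity, the values $(w,w_g)=(2,1),(4,3),(3,1)$, the solution of the system, and the final enumeration of base-point placements on $C(5)$ and $C(2,2)\cong P_4$, and all are correct, including the handling of the degenerate case $l=r$ (where the structural identity forces the $w^2$-coefficient to vanish, contradicting $\alpha=2$). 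Your approach buys an exact determination $Z(C^{(2)})=\alpha+\beta+\gamma=5$ rather than the paper's bound $Z(C^{(2)})\leq 6$, needs only the much shorter row $\ZZZ_5$ of the table rather than $\ZZZ_{13}$, and compresses the case analysis to four placements on a path; the paper's argument, by contrast, stays entirely within elementary expansions and table look-ups and avoids introducing the quadratic-form formalism, at the cost of a longer and more ad hoc enumeration.
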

  \begin{proof}
   It follows that 
   \begin{align*}
   &Z(G_{7,4}\vee C^{(2)}\vee G_{7,4})\\
    =& Z(C(\dot 1,3) \vee C^{(2)} \vee C(\dot 1,3))\\
    =&  Z(C(\dot 1,3) \vee C^{(2)} \vee C(\dot 1,1))    
      +2 Z(C(\dot 1,3) \vee C^{(2)})\\    
    =&  Z(C(\dot 1,1) \vee C^{(2)} \vee C(\dot 1,1))    
    +2 Z( C^{(2)}\vee C(\dot 1,1))\\
    &+2 Z(C(\dot 1,1) \vee C^{(2)}) 
     +4 Z(C^{(2)}) \\
  \geq& Z(C(\dot 1,1) \vee C^{(2)} \vee C(\dot 1,1))+8Z(C^{(2)}) .
   \end{align*}
   Since $G_{3,2}=C(\dot 1,1)$ and $Z(G_{3,2}\vee C^{(2)}\vee G_{3,2})=13$,
   we have
   \begin{align*}
    65=Z(G_{7,4}\vee C^{(2)}\vee G_{7,4})
    \geq 13+8Z(C^{(2)}).
   \end{align*}
   Hence $Z(C^{(2)})\leq 6$.

   Consider the case where $C^{(2)}=C(6)$.
   Since the graph $G_{3,2}$ is $C(\dot2)$,
   we have
   \begin{align*}
     Z(G_{3,2}\vee C^{(2)} \vee G_{3,2})
    =
    \begin{cases}
     Z(C(8)) =8  &(C^{(2)}=C(\ddot6)) \\
     Z(C(6,2)) =13  &(C^{(2)}= C(\dot5,\dot1)) \\
     Z(C(5,3)) =16  &(C^{(2)}=C(5,\ddot 1)) \\
     Z(C(2,4,2)) =20  &(C^{(2)}=C(\dot1,4,\dot1)) .
     \end{cases}
   \end{align*}
   Hence the graph $C^{(2)}$ is $C(\dot5,\dot1)$.
   On the other hand,
   since the graph $G_{4,3}$ is $C(\dot3)$,
   we have $Z(G_{4,3}\vee C^{(2)} \vee G_{4,3})=Z(C(3,7))=22$.

   Consider the case where $C^{(2)}\neq C(6)$.
   Since $Z(C^{(2)})\leq 6$, the number of edges in the graph $C^{(2)}$
   is less than or equal to $4$.
   Since 
   $Z(G_{3,2}\vee C^{(2)} \vee G_{3,2})=13$,
   candidates of the graph
   $G_{3,2}\vee C^{(2)} \vee G_{3,2}=C(\dot2) \vee C^{(2)} \vee C(\dot2)$ are 
   the following:
   \begin{align*}
    &C(2,1,1,2), &
    &C(3,4), &
    &Q_4(1,1,1,3) .
   \end{align*}
   Since $Z(Q_4(1,1,1,1))=7>4$,
   the graph $C^{(2)}$ is not $Q_4(1,1,1,1)$.
   If the graph $G_{3,2}\vee C^{(2)} \vee G_{3,2}$ 
   is the caterpillar $C(2,1,1,2)$,
   then the graph $C^{(2)}$ is 
   $C(\dot 1,1,1,\dot 1)$.
   Hence we 
   consider only the case where $G_{3,2}\vee C^{(2)} \vee G_{3,2}=C(3,4)$.
   In this case, candidates of  the graph $C^{(2)}$ are the following:
   \begin{align*}
    &C(\ddot 1,4), &
    &C(1,\dot 1,\dot 2,1), &
    &C( 3,\ddot 2) .
   \end{align*}
   Hence we have
   \begin{align*}
    Z(G_{4,3}\vee C^{(2)} \vee G_{4,3})
    =
    \begin{cases}
     Z(5,3,1)=21 &(C^{(2)}=C(\ddot 1,4))\\
     Z(4,4,1)=21 &(C^{(2)}=C(\dot 2,\dot 3))\\
     Z(1,2,6)=19 &(C^{(2)}=C(3,\ddot 2)).
    \end{cases}
   \end{align*}
  \end{proof}

\bibliographystyle{amsplain-url}
\bibliography{by-mr}

\end{document}